\documentclass[preprint,3p,12pt,times]{elsarticle}


\journal{Elsevier}









\bibliographystyle{elsarticle-num}

\usepackage{mathrsfs}
\usepackage{amsmath}

\DeclareMathOperator{\Order}{O}
\DeclareMathOperator{\rme}{e}
\DeclareMathOperator{\imnum}{i}
\DeclareMathOperator{\arcsinh}{arcsinh}
\renewcommand{\Re}{\operatorname{Re}}
\renewcommand{\Im}{\operatorname{Im}}
\newdefinition{defn}{Definition}[section]
\newdefinition{exam}{Example}
\newtheorem{thm}{Theorem}[section]
\newtheorem{lem}[thm]{Lemma}
\newproof{proof}{Proof}
\newcommand{\domD}{\mathscr{D}}
\newcommand{\Hone}{\mathbf{H}^1}
\newcommand{\divv}{\mathrm{d}}
\newcommand{\diff}{\,\divv}
\renewcommand{\pi}{\piup}
\numberwithin{equation}{section}

\begin{document}

\begin{frontmatter}

\title{New conformal map for the Sinc approximation
for exponentially decaying functions
over the semi-infinite interval\tnoteref{mytitlenote}}
\tnotetext[mytitlenote]{This work was partially supported by JSPS Grant-in-Aid for Young Scientists (B) JP17K14147.}

\author{Tomoaki Okayama}
\address{Graduate School of Information Sciences, Hiroshima City University,
3-4-1, Ozuka-higashi, Asaminami-ku, Hiroshima 731-3194, Japan}
\ead{okayama@hiroshima-cu.ac.jp}

\author{Yuya Shintaku}
\address{Kanon Junior High School, 3-4-6, Minami-kanon, Nishi-ku, Hiroshima 733-0035, Japan}

\author{Eisuke Katsuura}
\address{Higashihiroshima City Official Government, 8-29, Saijo-sakaemachi, Higashihiroshima 739-8601, Japan}




\begin{abstract}
The Sinc approximation has shown high efficiency for
numerical methods in many fields.
Conformal maps play an important role in the success, i.e.,
appropriate conformal map must be employed
to elicit high performance of the Sinc approximation.
Appropriate conformal maps have been proposed for typical cases; however,
such maps may not be optimal.
Thus, the performance of the Sinc approximation may be improved by using
another conformal map rather than an existing map.
In this paper, we propose a new conformal map
for the case where functions are defined over the semi-infinite interval
and decay exponentially.
Then, we demonstrate in both theoretical and numerical ways that
the convergence rate is improved by replacing the existing conformal map
with the proposed map.
\end{abstract}

\begin{keyword}
Sinc numerical method\sep variable transformation\sep
computable error bound
\MSC[2010] 65D05 \sep 65D15 \sep 65G20
\end{keyword}

\end{frontmatter}


\section{Introduction}
The Sinc approximation is a highly efficient approximation formula
for analytic functions (described precisely later), expressed as
\begin{equation}
 F(x)\approx \sum_{k=-M}^{N} F(kh) S(k,h)(x),\quad x\in(-\infty,\infty),
\label{eq:Sinc-approx}
\end{equation}
where $S(k,h)(x)$ is the so-called Sinc function, defined as
\[
 S(k,h)(x)=\frac{\sin[\pi(x/h - k)]}{\pi(x/h - k)},
\]
and $M$, $N$, $h$ are selected according to
the given positive integer $n$.
This approximation gives root-exponential convergence,
if $|F(x)|$ decays exponentially as $x\to\pm\infty$.
Here, we should also note that the target interval to be considered is
the infinite interval $(-\infty,\infty)$.
Accordingly, $F$ should be defined over the infinite interval.
In the case where the function to be approximated decays exponentially
but is defined over the semi-infinite interval $(0,\infty)$,
e.g., $f(t)=\sqrt{t}\rme^{-t}$,
Stenger~\cite{stenger93:_numer_method} proposed the use of
a conformal map
\begin{equation}
 t=\psi(x)=\arcsinh(\rme^{x}),
 \label{eq:Stenger-map}
\end{equation}
whereby the transformed function $f(\psi(x))$
is defined over $(-\infty,\infty)$ and decays exponentially
as $x\to\pm\infty$. Therefore, we can apply the Sinc approximation
to $f(\psi(x))$ as
\[
f(\psi(x)) \approx \sum_{k=-M}^N f(\psi(kh)) S(k,h)(x),
\]
or equivalently,
\begin{equation}
f(t) \approx \sum_{k=-M}^N f(\psi(kh)) S(k,h)(\psi^{-1}(t)).
 \label{eq:Stenger-approx}
\end{equation}
In other cases, he also considered appropriate conformal maps.
As a result, numerical methods based on the Sinc approximation
demonstrate root-exponential convergence in many
fields~\cite{lund92:_sinc,stenger00:_summar,sugihara04:_recen,stenger11:_handb},
and such methods surpass conventional methods that converge polynomially.

The main objective of this paper is to improve
the conformal map~\eqref{eq:Stenger-map}.
A conformal map that maps $(-\infty,\infty)$ onto $(0,\infty)$
is not unique.
In addition, the convergence rate may be improved if we use another conformal map that performs the same role.
In fact, in the area of numerical integration,
convergence rate improvement has been reported~\cite{Machida,Hara}
by replacing the conformal map $t=\psi(x)$ with
\[
 t=\phi(x)=\log(1+\rme^x).
\]
Considering the above as motivation, this study proposes to combine the
Sinc approximation with $t=\phi(x)$ rather than $t=\psi(x)$ as
\begin{equation}
f(t) \approx \sum_{k=-M}^N f(\phi(kh)) S(k,h)(\phi^{-1}(t)).
 \label{eq:Shintaku-approx}
\end{equation}
Based on theoretical and numerical investigations,
we demonstrate that the convergence rate of
the approximation~\eqref{eq:Shintaku-approx} is better than that of
the approximation~\eqref{eq:Stenger-approx}.

The remainder of this paper is organized as follows.
In Section~\ref{sec:error-bounds},
we describe the error bound for the
the approximation~\eqref{eq:Stenger-approx} (existing result) and
the error bound for the approximation~\eqref{eq:Shintaku-approx}
(new result by this paper).
Furthermore, the difference between the two approximations is also discussed
in this section.
Numerical examples that support the theoretical results
are given in Section~\ref{sec:numer-exam}.
Proofs of the new result presented in Section~\ref{sec:error-bounds}
are given in Section~\ref{sec:proofs}, and conclusions and suggestions for future work are given in Section~\ref{sec:conclusion}.

\section{Error bounds for the existing and new approximations}
\label{sec:error-bounds}

\subsection{Simply connected complex domain to be considered}

For the approximation~\eqref{eq:Sinc-approx}
to be accurate, $F$ should be analytic in
a strip domain
$\domD_d=\{\zeta\in\mathbb{C}:|\Im \zeta|< d\}$
for $d>0$.
Therefore, in the case of the approximation~\eqref{eq:Stenger-approx},
$f(\psi(\cdot))$ should be analytic in $\domD_d$.
This means that $f$ should be analytic in
\[
\psi(\domD_d) = \left\{
z\in\mathbb{C}: |\arg(\sinh z)| < d
\right\},
\]
which is a translated domain from $\domD_d$ by the conformal map $\psi$.
Similarly, in the case of the approximation~\eqref{eq:Shintaku-approx},
$f$ should be analytic in
\[
 \phi(\domD_d) = \left\{
z\in\mathbb{C}: |\arg(\rme^{z}-1)| < d
\right\},
\]
which is a translated domain from $\domD_d$ by the conformal map $\phi$.
Those two domains with $d=1$ are shown in Figures~\ref{Fig:Stenger-dom}
and~\ref{Fig:Shintaku-dom}.
Note that both domains include the semi-infinite interval $(0,\infty)$.

\begin{figure}[htbp]
\begin{center}
\begin{minipage}{0.49\linewidth}
{\centering
  \includegraphics[width=\linewidth]{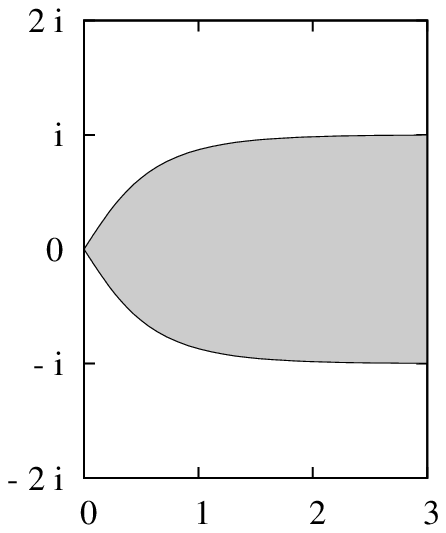}
  \caption{Simply connected complex domain $\psi(\domD_1)$.}
  \label{Fig:Stenger-dom}
}
\end{minipage}
\begin{minipage}{0.49\linewidth}
{\centering
  \includegraphics[width=\linewidth]{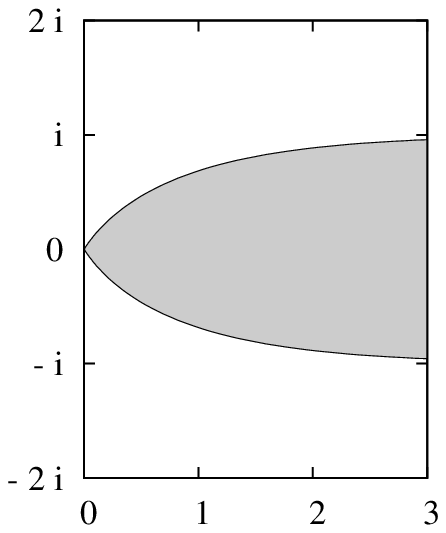}
  \caption{Simply connected complex domain $\phi(\domD_1)$.}
  \label{Fig:Shintaku-dom}
}
\end{minipage}
\end{center}
\end{figure}

\subsection{Theoretical results}

The error of the existing approximation~\eqref{eq:Stenger-approx}
was estimated as follows.

\begin{thm}[Okayama~{\cite[Theorem~2.4]{okayama16:_error}}]
\label{thm:existing}
Assume that $f$ is analytic in $\psi(\domD_d)$ with $0<d\leq \pi/2$
and there exist positive constants $K$, $\alpha$, and $\beta$ such that
\begin{equation}
|f(z)|\leq K \left|\frac{z}{1+z}\right|^{\alpha} |\rme^{-z}|^{\beta}
\label{eq:f-bound}
\end{equation}
holds for all $z\in\psi(\domD_d)$.
Let $\mu=\min\{\alpha,\beta\}$,
let $M$ and $N$ be defined as
\begin{equation}
\begin{cases}
M=n,\quad N=\lceil\alpha n/\beta\rceil
 & \,\,\,(\text{if}\,\,\,\mu = \alpha),\\
N=n,\quad M=\lceil\beta n/\alpha\rceil
 &  \,\,\,(\text{if}\,\,\,\mu = \beta),
\end{cases}
\label{eq:def-MN}
\end{equation}
and let $h$ be defined as
\begin{equation}
h = \sqrt{\frac{\pi d}{\mu n}}.
\label{eq:def-h}
\end{equation}
Then,
\begin{equation}
 \sup_{t\in(0,\infty)}\left|
  f(t) - \sum_{k=-M}^N f(\psi(kh))S(k,h)(\psi^{-1}(t))
 \right|
\leq C \sqrt{n} \rme^{-\sqrt{\pi d \mu n}}
\label{leq:existing-estimate}
\end{equation}
holds, where $C$ is a constant given by
\[
C = \frac{2 K}{\sqrt{\pi d \mu}}
\left\{
 \frac{2}{\sqrt{\pi d \mu}(1-\rme^{-2\sqrt{\pi d \mu}})\{\cos(d/2)\}^{\alpha+\beta}}2^{(\alpha+\beta)/2} + 1
\right\}.
\]
\end{thm}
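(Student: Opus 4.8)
The plan is to pass to the strip and carry out the standard Sinc-approximation error analysis there. Writing $g=f\circ\psi$, the analyticity assumption on $f$ over $\psi(\domD_d)$ is exactly the statement that $g$ is analytic on the strip $\domD_d$, and since $S(k,h)(\psi^{-1}(t))=S(k,h)(x)$ under $t=\psi(x)$, the quantity to be bounded becomes
\[
\sup_{x\in(-\infty,\infty)}\left|g(x)-\sum_{k=-M}^{N}g(kh)S(k,h)(x)\right|.
\]
I would split this into the discretization error (the difference between $g$ and its full, doubly-infinite Sinc interpolant) and the two truncation tails $\sum_{k<-M}$ and $\sum_{k>N}$. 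For the tails, I first convert the hypothesis~\eqref{eq:f-bound} into pointwise decay of $g$ on the real line: using $\psi(x)=\arcsinh(\rme^{x})\sim\rme^{x}$ as $x\to-\infty$ and $\psi(x)\sim x+\log 2$ as $x\to+\infty$, one gets $|g(x)|\le(\text{const})\,\rme^{\alpha x}$ on the left and $|g(x)|\le(\text{const})\,\rme^{-\beta x}$ on the right. Bounding $|S(k,h)(x)|\le 1$ and summing the resulting geometric series gives tails of size $(\text{const})\,\rme^{-\alpha Mh}/(1-\rme^{-\alpha h})$ and $(\text{const})\,\rme^{-\beta Nh}/(1-\rme^{-\beta h})$; the factor $1/(1-\rme^{-\mu h})\sim 1/(\mu h)$ is precisely what produces the $\sqrt{n}$, since $1/h=\sqrt{\mu n/(\pi d)}$.

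For the discretization error I would invoke the classical bound that, for $g\in\Hone(\domD_d)$,
\[
\sup_{x}\left|g(x)-\sum_{k=-\infty}^{\infty}g(kh)S(k,h)(x)\right|\le\frac{\|g\|_{\Hone(\domD_d)}}{2\pi d\,(1-\rme^{-2\pi d/h})}\rme^{-\pi d/h}.
\]
The crux of the whole proof is then to show $g\in\Hone(\domD_d)$ and to estimate its norm $\|g\|_{\Hone(\domD_d)}=\int_{\partial\domD_d}|g(\zeta)|\,|\divv\zeta|$ explicitly. On the boundary lines $\zeta=x\pm\imnum d$ I would exploit the identity $\sinh(\psi(\zeta))=\rme^{\zeta}$, which gives exact control of $|\rme^{-\psi(\zeta)}|$, together with an estimate of $|\psi(\zeta)/(1+\psi(\zeta))|$. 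Along these lines $\arg\rme^{\zeta}=\pm d$, and it is this angle that forces the factor $\cos(d/2)$ to appear, while the asymptotics of $\psi$ produce the $2^{(\alpha+\beta)/2}$. Substituting into~\eqref{eq:f-bound} and integrating in $x$—the integral converging precisely because of the exponential decay in both directions—yields a finite, explicit value of $\|g\|_{\Hone(\domD_d)}$. This boundary estimation, with every constant kept sharp, is the main obstacle; everything else is routine summation.

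Finally I would assemble the three pieces and optimize. Choosing $h=\sqrt{\pi d/(\mu n)}$ makes $\rme^{-\pi d/h}=\rme^{-\sqrt{\pi d\mu n}}$, and the choice~\eqref{eq:def-MN} of $M$ and $N$ is exactly what forces both truncation tails to decay at the same rate $\rme^{-\sqrt{\pi d\mu n}}$. Using $n\ge 1$ to replace $1-\rme^{-2\pi d/h}$ by the smaller $1-\rme^{-2\sqrt{\pi d\mu}}$ in the denominator, and bounding the discretization contribution from above by multiplying it by $\sqrt{n}\ge 1$ so that all three terms share the common factor $\sqrt{n}\,\rme^{-\sqrt{\pi d\mu n}}$, I would collect the constants into the stated $C$, in which the bracketed term corresponds to the discretization error and the trailing $+1$ to the two truncation tails.
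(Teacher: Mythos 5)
Your outline follows the same framework that this paper uses for its analogous new result: the paper itself does not prove Theorem~\ref{thm:existing} (it is quoted from Okayama's earlier work), but its proof of Theorem~\ref{thm:new} in Section~\ref{sec:proofs} proceeds exactly as you propose --- transplant to the strip via $F=f\circ\psi$, split into discretization and truncation errors, bound the discretization error by Stenger's theorem applied to $F\in\Hone(\domD_d)$, bound the tails by the decay of $F$ on $\mathbb{R}$, and assemble using the prescribed $h$, $M$, $N$. Your truncation analysis is essentially sound: since $\arcsinh(\rme^{x})\leq\rme^{x}$ one gets $|F(x)|\leq K\rme^{\alpha x}$, and since $\rme^{-\psi(x)}=1/(\rme^{x}+\sqrt{1+\rme^{2x}})\leq\rme^{-x}$ one gets $|F(x)|\leq K\rme^{-\beta x}$, so the two tails sum to at most $\tfrac{2K}{\mu h}\rme^{-\mu nh}$, which is the trailing ``$+1$'' in $C$. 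Your assembly step (choosing $h$, using $n\geq 1$ to freeze the factor $1-\rme^{-2\pi d/h}$, and inserting $\sqrt{n}\geq 1$) is also the paper's.

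There are two problems, one minor and one substantive. The minor one: you misquote the classical discretization bound. Stenger's Theorem~3.1.3 (restated as Theorem~\ref{Thm:Sinc-Infinite-Sum-Approx} in the paper) gives the denominator $\pi d\,(1-\rme^{-2\pi d/h})$, equivalently $\mathcal{N}_1/(2\pi d\sinh(\pi d/h))$; your version with $2\pi d\,(1-\rme^{-2\pi d/h})$ is a factor of two \emph{stronger} than the true theorem (with the same full-boundary norm you wrote down), so as stated your intermediate step is false --- though easily repaired, and it is the correct, weaker version that assembles exactly into the stated $C$.

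The substantive gap is that the crux of the theorem --- the explicit estimate
\begin{equation*}
\mathcal{N}_1(F,d)\;\leq\;\frac{4K}{\mu\{\cos(d/2)\}^{\alpha+\beta}}\,2^{(\alpha+\beta)/2}
\end{equation*}
--- is only gestured at, not proved. You say the identity $\sinh(\psi(\zeta))=\rme^{\zeta}$, the angle $\arg\rme^{\zeta}=\pm d$, and the asymptotics of $\psi$ will ``produce'' the factors $\cos(d/2)$ and $2^{(\alpha+\beta)/2}$, and you call the rest routine. It is not: in the paper's proof of the analogous Theorem~\ref{thm:new} this single step consumes nearly all of the technical effort --- the pointwise bounds of Lemma~\ref{lem:exp-bound}, the global inequality of Lemma~\ref{lem:essential-bound} valid on the whole closed strip (whose proof needs its own Section~\ref{sec:essential-ineq} and the auxiliary Lemma~\ref{lem:final-prepare}), the verification that the vertical-segment contributions to $\mathcal{N}_1$ vanish, and the exact evaluation of the resulting real integral producing the $1/\mu$. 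For $\psi$ one needs the corresponding sharp inequalities for $|\psi(\zeta)/(1+\psi(\zeta))|$ and $|\rme^{-\psi(\zeta)}|$ on the lines $\Im\zeta=\pm d$, uniformly in $\Re\zeta$, with constants tracked exactly. Since the content of the theorem is precisely the computable constant $C$ (the rate $\Order(\sqrt{n}\,\rme^{-\sqrt{\pi d\mu n}})$ alone would be much easier), leaving this estimate unexecuted means the theorem is not actually proved by your proposal.
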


In the same manner,
this study estimates the error of the new
approximation~\eqref{eq:Shintaku-approx}
as follows. The proof is given in Section~\ref{sec:proofs}.

\begin{thm}
\label{thm:new}
Assume that $f$ is analytic in $\phi(\domD_d)$ with $0<d<\pi$
and there exist positive constants $K$, $\alpha$, and $\beta$
such that~\eqref{eq:f-bound} holds for all $z\in\phi(\domD_d)$.
Let $\mu=\min\{\alpha,\beta\}$, let $M$ and $N$ be defined as~\eqref{eq:def-MN},
and let $h$ be defined as~\eqref{eq:def-h}.
Then,
\begin{equation}
 \sup_{t\in(0,\infty)}\left|
  f(t) - \sum_{k=-M}^N f(\phi(kh))S(k,h)(\phi^{-1}(t))
 \right|
\leq C \sqrt{n} \rme^{-\sqrt{\pi d \mu n}}
\label{leq:new-estimate}
\end{equation}
holds, where $C$ is a constant given by
\[
C = \frac{2 K}{\sqrt{\pi d \mu}}
\left\{
 \frac{2}{\sqrt{\pi d \mu}(1-\rme^{-2\sqrt{\pi d \mu}})\{\cos(d/2)\}^{\alpha+\beta}}\left(\frac{\rme}{\rme - 1}\right)^{\mu/2} + 1
\right\}.
\]
\end{thm}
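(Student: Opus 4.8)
The plan is to reduce the statement to the ordinary Sinc approximation on the whole real line and then to run the same two-part argument that underlies Theorem~\ref{thm:existing}, the only genuinely new ingredients being a pair of estimates tailored to the map $\phi$. First I would set $F(x)=f(\phi(x))$; since $f$ is analytic on $\phi(\domD_d)$ by hypothesis, $F$ is analytic on $\domD_d$, and, writing $x=\phi^{-1}(t)$, the quantity to be bounded in~\eqref{leq:new-estimate} is exactly the Sinc error $F(x)-\sum_{k=-M}^{N}F(kh)S(k,h)(x)$ taken over $x\in\mathbb{R}$. I would then split this into the \emph{discretization error}, the difference between $F$ and its full cardinal series $\sum_{k=-\infty}^{\infty}F(kh)S(k,h)$, and the \emph{truncation error}, the two tails $\sum_{k<-M}$ and $\sum_{k>N}$, and bound the two separately.

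For the discretization error I would invoke the classical estimate for functions in the Hardy space $\Hone(\domD_d)$, which supplies the factor $\rme^{-\pi d/h}/(1-\rme^{-2\pi d/h})$; with $h=\sqrt{\pi d/(\mu n)}$ from~\eqref{eq:def-h} this equals $\rme^{-\sqrt{\pi d\mu n}}/(1-\rme^{-2\sqrt{\pi d\mu n}})$, and the elementary bound $1-\rme^{-2\sqrt{\pi d\mu n}}\ge 1-\rme^{-2\sqrt{\pi d\mu}}$ for $n\ge1$ converts the denominator into the $n$-independent one appearing in $C$ (the exponential alone being $\le\sqrt{n}\,\rme^{-\sqrt{\pi d\mu n}}$, so no extra $\sqrt{n}$ is needed here). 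The real work is to bound the $\Hone$-norm of $F$, i.e.\ to control $|F(\zeta)|$ for $\zeta=x+\imnum y$ with $|y|<d$. Starting from~\eqref{eq:f-bound} and the identity $\rme^{-\phi(\zeta)}=1/(1+\rme^{\zeta})$, I would write $|\rme^{-\phi(\zeta)}|^{\beta}=|1+\rme^{\zeta}|^{-\beta}$ and, after a lower bound for $|1+\rme^{\pm\zeta}|$ on the strip, extract the factor $\{\cos(d/2)\}^{-(\alpha+\beta)}$ that originates from $|1+\rme^{\imnum d}|=2\cos(d/2)$; this is positive precisely when $0<d<\pi$, which is exactly what permits the enlarged analyticity range in the hypothesis compared with Stenger's $d\le\pi/2$. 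The remaining algebraic factor $|\phi(\zeta)/(1+\phi(\zeta))|^{\alpha}$ I would compare against the canonical decay profile of $F$ on $\domD_d$, and it is here that the gentler growth of $\phi(\zeta)=\log(1+\rme^{\zeta})$ (as against $\arcsinh(\rme^{\zeta})$) yields the improved constant $(\rme/(\rme-1))^{\mu/2}$ in place of $2^{(\alpha+\beta)/2}$.

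For the truncation error I would use $|S(k,h)(x)|\le1$ together with the real-line decay $|F(x)|\le K\rme^{\alpha x}$ as $x\to-\infty$ and $|F(x)|\le K\rme^{-\beta x}$ as $x\to+\infty$, both immediate from~\eqref{eq:f-bound} and the asymptotics $\phi(x)\sim\rme^{x}$ and $\phi(x)\sim x$. Summing the two geometric tails and inserting the definitions~\eqref{eq:def-MN} of $M$ and $N$ makes both exponents collapse to $\sqrt{\pi d\mu n}$, while the geometric-sum prefactor, of size $\Order(1/h)=\Order(\sqrt{n})$, produces the genuine $\sqrt{n}$ and accounts for the trailing $+1$ inside the braces of $C$. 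Adding the two contributions and majorizing every $n$-dependent constant by its value at $n=1$ then delivers~\eqref{leq:new-estimate}. I expect the main obstacle to be the strip estimate of the second paragraph: establishing the clean lower bound for $|1+\rme^{\pm\zeta}|$ valid on the full range $0<d<\pi$, and in particular pinning down the \emph{sharp} constant $(\rme/(\rme-1))^{\mu/2}$ for the $\phi$-dependent factor, since this quantitative comparison is precisely what certifies that the new map $\phi$ outperforms $\psi$.
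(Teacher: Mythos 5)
Your overall architecture coincides with the paper's: set $F=f\circ\phi$, split the error into a discretization part and a truncation part, bound the first via Stenger's theorem for $\Hone(\domD_d)$ together with an estimate of $\mathcal{N}_1(F,d)$, and bound the second using $|S(k,h)(x)|\leq 1$ and exponentially decaying tails. Your bookkeeping is also correct: with $h=\sqrt{\pi d/(\mu n)}$ the truncation term $\frac{2K}{\mu h}\rme^{-\mu n h}$ produces the $\sqrt{n}$ and the trailing $+1$ in $C$, while $1-\rme^{-2\pi d/h}\geq 1-\rme^{-2\sqrt{\pi d\mu}}$ handles the denominator; and your truncation estimate (decay $K\rme^{\alpha x}$ as $x\to-\infty$, $K\rme^{-\beta x}$ as $x\to+\infty$, valid in fact for all real $x$) is essentially the paper's Lemma~\ref{lem:truncation}. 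However, the discretization half has a genuine gap at exactly the point you defer to the end: the bound for $|\phi(\zeta)/(1+\phi(\zeta))|^{\alpha}$ on the strip. This is not a matter of ``pinning down a sharp constant'' in an otherwise routine estimate. The function $\phi(\zeta)/(1+\phi(\zeta))$ has a \emph{pole} inside the closed strip $\overline{\domD_{\pi}}$, at $\zeta_0=\log(1-\rme^{-1})+\imnum\pi$, where $\phi(\zeta_0)=-1$; as $d\to\pi$ this singularity is approached from inside $\domD_d$, so no bound of the form you sketch can hold with a $d$-uniform constant unless it builds in a cancellation of that pole.

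The paper's resolution (Lemma~\ref{lem:essential-bound}) is the inequality
\[
\left|\frac{\log(1+\rme^{\zeta})}{1+\log(1+\rme^{\zeta})}\cdot\frac{\rme^{-l}+\rme^{\zeta}}{\rme^{\zeta}}\right|\leq 1,
\qquad \zeta\in\overline{\domD_{\pi}},\quad l=\log\frac{\rme}{\rme-1},
\]
where the shift $l$ is chosen precisely so that the zero of $\rme^{-l}+\rme^{\zeta}$ at $\zeta_0$ cancels the pole of $\phi/(1+\phi)$. This yields $|\phi(\zeta)/(1+\phi(\zeta))|^{\alpha}\leq 1/|1+\rme^{-(\zeta+l)}|^{\alpha}$, to which the half-plane estimate of Lemma~\ref{lem:exp-bound} applies, giving an integrable majorant of $|F|$ on the lines $\Im\zeta=\pm d$; the constant $(\rme/(\rme-1))^{\mu/2}=(\rme^{l})^{\mu/2}$ then emerges from the final integral $\int_0^{\infty}\rme^{-\mu t}\rme^{\mu l/2}\,\divv t$ after centering the variable at $-l/2$. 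Proving the displayed inequality is the bulk of the paper's work (Section~\ref{sec:essential-ineq}): the maximum modulus principle reduces it to the boundary lines $\Im\zeta=\pm\pi$, and there one needs the substitutions $t=\log(\rme^x-1)$ and $t=\log(1-\rme^x)$, several case distinctions, and a monotonicity argument resting on the separate calculus inequality of Lemma~\ref{lem:final-prepare}. Your proposal contains no route to this — indeed it does not register that the obstruction is a pole to be cancelled rather than a constant to be optimized — so, as it stands, the proof of Lemma~\ref{lem:discretization}, and hence of the theorem with the stated constant $C$, is missing its core.
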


\subsection{Discussion}

In view of~\eqref{leq:existing-estimate}
and~\eqref{leq:new-estimate},
their convergence rates appear to be the same, i.e., $\Order(\sqrt{n}\rme^{-\sqrt{\pi d \mu n}})$.
However, there is a difference in the condition of $d$,
i.e., $0<d\leq\pi/2$ in Theorem~\ref{thm:existing}
and $0<d<\pi$ in Theorem~\ref{thm:new}.
This means that $d$ of the new approximation may be greater than
$d$ of the existing approximation.
In this case, the new approximation~\eqref{eq:Shintaku-approx}
converges faster than~\eqref{eq:Stenger-approx}.

This difference in the range of $d$ originates from the conformal maps
$\psi$ and $\phi$. By observing the derivatives of the functions
\begin{align*}
 \psi'(\zeta)=\frac{1}{\sqrt{1+\rme^{-2\zeta}}},
\quad
 \phi'(\zeta)=\frac{1}{1+\rme^{-\zeta}},
\end{align*}
we see that $\psi(\zeta)$ is not analytic at $\zeta=\pm\imnum (\pi/2)$,
and $\phi(\zeta)$ is not analytic at $\zeta=\pm \imnum \pi$.
Accordingly, $f(\psi(\zeta))$ is analytic at most $\domD_{\pi/2}$,
and $f(\phi(\zeta))$ is analytic at most $\domD_{\pi}$.
Therefore, the range of $d$ is $0<d\leq \pi/2$ in Theorem~\ref{thm:existing}
and is $0<d<\pi$ in Theorem~\ref{thm:new}.
Note that we cannot permit $d=\pi$ in Theorem~\ref{thm:new}
because the denominator of $C$ includes $\cos(d/2)$.

\section{Numerical examples} \label{sec:numer-exam}

Numerical results are presented in this section.
All computation programs were written in C with
double-precision floating-point arithmetic.
The programs and computation results are available online at
\texttt{https://github.com/okayamat/sinc-expdecay-semiinf}.

We consider the following examples.

\begin{exam}[{\cite[Example 3]{okayama16:_error}}]
\label{Exam:f_1}
Consider the function
\[
 f_1(t)=t^{\pi/4}\rme^{-t},
\]
which satisfies the assumptions
in Theorem~\ref{thm:existing}
with $\alpha=\pi/4$, $\beta=1 - \alpha/\pi$, $d=\pi/2$,
and $K=(1+(\pi/2)^2)^{\alpha/2}$,
and also
those in Theorem~\ref{thm:new}
with $\alpha=\pi/4$, $\beta=1 - \alpha/(2\pi)$,
$d=3$, and $K=[\{(1 - \gamma)^2 + \pi^2\}\rme^{\gamma/\pi}]^{\alpha/2}$,
where $\gamma=-\log(\cos(d/2))$.
\end{exam}
\begin{exam}
\label{Exam:f_2}
Consider the function
\[
 f_2(t)=\sqrt{\rme^{t} - 1}\rme^{-3t/2},
\]
which satisfies the assumptions
in Theorem~\ref{thm:existing}
with $\alpha=1/2$, $\beta=1$, $d=\pi/2$, and $K=4^{\alpha}$,
and also
those in Theorem~\ref{thm:new}
with $\alpha=1/2$, $\beta=1$, $d=3$, and
$K=\{\gamma (1 + \log(1+\gamma))/\log(1+\gamma)\}^{\alpha}$,
where $\gamma=1 + 1/\cos(d/2)$.
\end{exam}
\begin{exam}
\label{Exam:f_3}
Consider the function
\[
 f_3(t) = \sqrt{1 + (1 - 2\rme^{-t})^2}\frac{t}{1+t}\rme^{-t},
\]
which satisfies the assumptions
in Theorem~\ref{thm:existing}
with $\alpha=\beta=1$, $d=\arctan(3)$, and $K=\sqrt{2}$,
and also
those in Theorem~\ref{thm:new}
with $\alpha=\beta=1$, $d=\pi/2$, and
$K=2$.
\end{exam}

Numerical results for the three functions are shown in
Figures~\ref{Fig:example1}--\ref{Fig:example3}, where
``Observed error'' denotes the maximum value
of the absolute error investigated at the following 201 points:
$t = 2^{-50}$, $2^{-49.5}$, $\ldots,$
$2^{-0.5}$, $2^{0}$, $2^{0.5}$, $\ldots,$ $2^{50}$.
As can be seen in each graph,
the new approximation~\eqref{eq:Shintaku-approx}
converges faster than~\eqref{eq:Stenger-approx}.
Furthermore,
the error bound by Theorems~\ref{thm:existing}
and~\ref{thm:new} (dotted line)
clearly bounds the observed error (solid line).

\begin{figure}[htbp]
{\centering
  \includegraphics[scale=0.95]{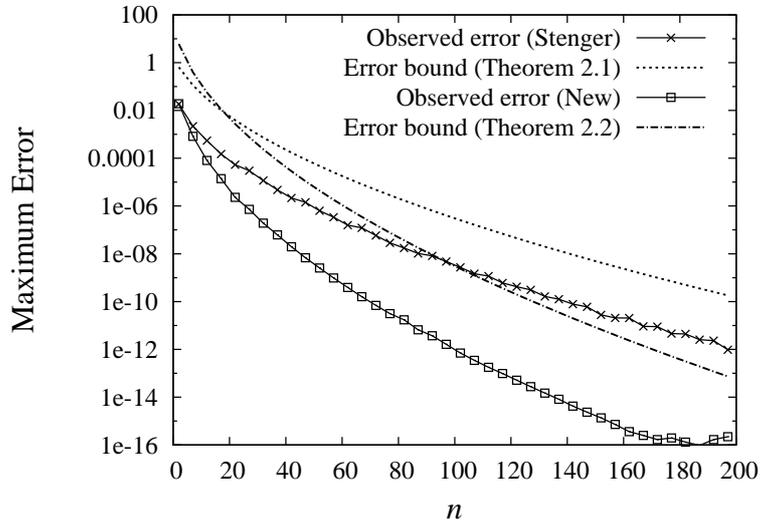}
  \caption{Approximation errors of $f_1$ and their bounds.}
  \label{Fig:example1}
}
\end{figure}
\begin{figure}[htbp]
{\centering
  \includegraphics[scale=0.95]{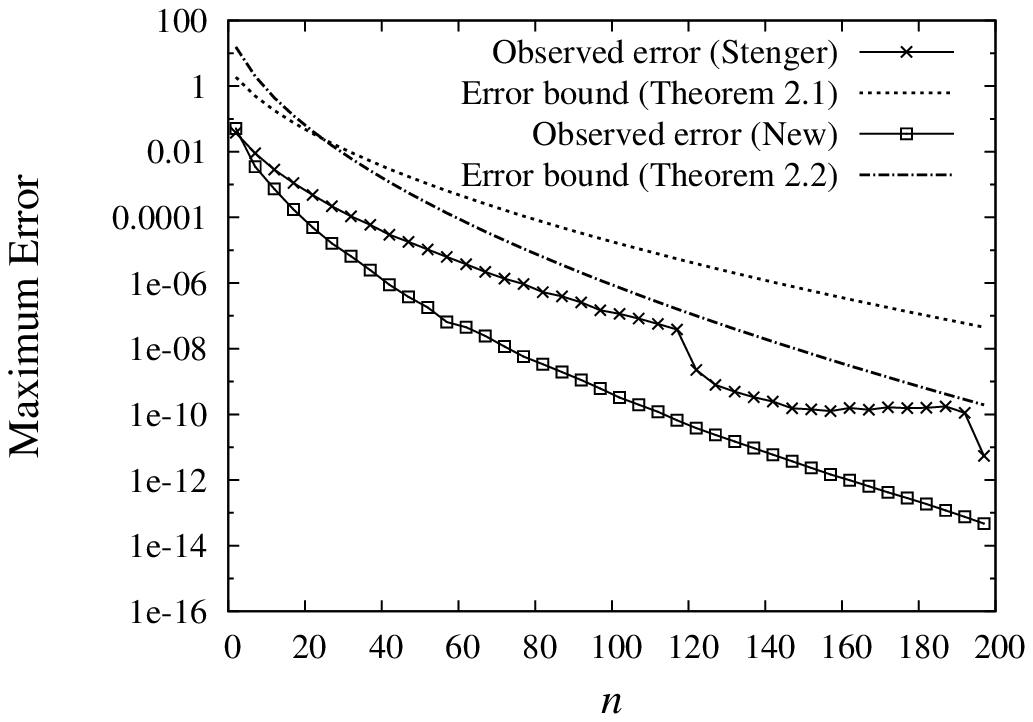}
  \caption{Approximation errors of $f_2$ and their bounds.}
  \label{Fig:example2}
}
\end{figure}
\begin{figure}[htbp]
{\centering
  \includegraphics[scale=0.95]{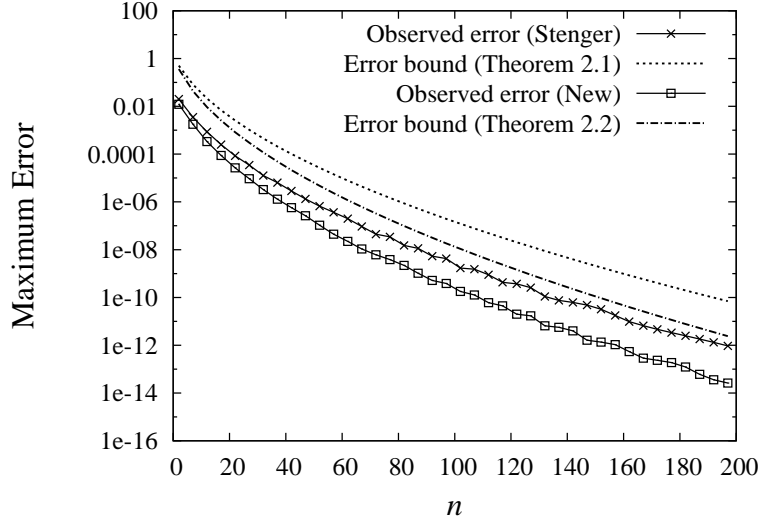}
  \caption{Approximation errors of $f_3$ and their bounds.}
  \label{Fig:example3}
}
\end{figure}

\section{Proofs}
\label{sec:proofs}

In this section, we prove Theorem~\ref{thm:new}.

\subsection{Proof sketch}

First, by applying $t=\phi(x)$ and putting $F(x)=f(\phi(x))$, we obtain
\[
\left|f(t) - \sum_{k=-M}^N f(\phi(kh))S(k,h)(\phi^{-1}(t))\right|
=
\left|F(x) - \sum_{k=-M}^N F(kh)S(k,h)(x)\right|.
\]
The idea for giving the error bound is
to divide the error into the following two terms:
\begin{align*}
 &\left|F(x) - \sum_{k=-M}^N F(kh)S(k,h)(x)\right|\\
&\leq\left|F(x) - \sum_{k=-\infty}^{\infty} F(kh)S(k,h)(x)\right|
+\left|\sum_{k=-\infty}^{-M-1}F(kh)S(k,h)(x)
+\sum_{k=N+1}^{\infty}F(kh)S(k,h)(x)
\right|.
\end{align*}
The first and second terms are referred to as the discretization and truncation errors, respectively.
We estimate the discretization error as follows.
The proof is provided in Section~\ref{sec:discretization}.
\begin{lem}
\label{lem:discretization}
Let the assumptions of Theorem~\ref{thm:new} be fulfilled.
Then, putting $F(x)=f(\phi(x))$, we have
\[
\sup_{x\in\mathbb{R}}
 \left|F(x) - \sum_{k=-\infty}^{\infty} F(kh)S(k,h)(x)\right|
\leq \frac{4K}{\pi d \mu(1 - \rme^{-2\pi d/h})\{\cos(d/2)\}^{\alpha+\beta}}
\left(\frac{\rme}{\rme - 1}\right)^{\mu/2} \rme^{-\pi d /h}.
\]
\end{lem}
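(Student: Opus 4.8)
The plan is to reduce the statement to the classical discretization-error estimate for the Sinc approximation and then to bound the relevant Hardy-space norm of $F=f(\phi(\cdot))$. Recall that for any $G$ in $\Hone(\domD_d)$ one has the standard bound
\[
\sup_{x\in\mathbb{R}}\left|G(x)-\sum_{k=-\infty}^{\infty}G(kh)S(k,h)(x)\right|
\le \frac{\|G\|_{\Hone(\domD_d)}}{2\pi d\,(1-\rme^{-2\pi d/h})}\rme^{-\pi d/h},
\]
where $\|G\|_{\Hone(\domD_d)}$ is the limit as $y\to d^{-}$ of the boundary integrals $\int_{-\infty}^{\infty}\bigl(|G(x+\imnum y)|+|G(x-\imnum y)|\bigr)\diff x$. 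Taking $G=F$, it suffices to show $\|F\|_{\Hone(\domD_d)}\le 8K\{\cos(d/2)\}^{-(\alpha+\beta)}(\rme/(\rme-1))^{\mu/2}/\mu$, since inserting this into the display above reproduces exactly the right-hand side of the lemma.

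First I would verify that $F=f\circ\phi$ indeed lies in $\Hone(\domD_d)$; this follows from the analyticity of $f$ on $\phi(\domD_d)$ together with the decay furnished by \eqref{eq:f-bound}, and it also legitimises passing to the boundary limit. Because the estimate \eqref{eq:f-bound} involves only moduli and $\phi(\overline{\zeta})=\overline{\phi(\zeta)}$, the bound obtained on the line $\Im\zeta=-d$ coincides with that on $\Im\zeta=d$; hence it suffices to estimate the single boundary integral $\int_{-\infty}^{\infty}|F(x+\imnum y)|\diff x$ and double it.

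Next I would insert \eqref{eq:f-bound} with $z=\phi(\zeta)$, $\zeta=x+\imnum y$. The exponential factor simplifies cleanly: since $\rme^{-\phi(\zeta)}=1/(1+\rme^{\zeta})$ and $1+\rme^{\zeta}=2\rme^{\zeta/2}\cosh(\zeta/2)$, one gets $|1+\rme^{\zeta}|=2\rme^{x/2}|\cosh(\zeta/2)|$ with $|\cosh(\zeta/2)|^{2}=\cos^{2}(y/2)+\sinh^{2}(x/2)\ge\cos^{2}(d/2)$. This elementary identity is the workhorse: it gives $|1+\rme^{\zeta}|\ge 2\rme^{x/2}\cos(d/2)$ on the whole strip, hence the factor $\{\cos(d/2)\}^{-\beta}$, while an analogous treatment of $z/(1+z)$ produces the remaining $\{\cos(d/2)\}^{-\alpha}$. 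The integrand then decays like $\rme^{\alpha x}$ as $x\to-\infty$ (the regime $\phi(\zeta)\to0$, where $z/(1+z)\approx\phi(\zeta)\approx\rme^{\zeta}$) and like $\rme^{-\beta x}$ as $x\to+\infty$ (where $z/(1+z)\to1$ and $|1+\rme^{\zeta}|\approx\rme^{x}$); splitting the integral at $x=0$ and integrating each exponential tail yields the factor $1/\mu$ with $\mu=\min\{\alpha,\beta\}$.

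The main obstacle is the uniform control of $|\phi(\zeta)/(1+\phi(\zeta))|$ across the strip and the extraction of the sharp constant $(\rme/(\rme-1))^{\mu/2}$. Unlike the algebraic map of Theorem~\ref{thm:existing}, the logarithmic map $\phi$ makes $\phi(\zeta)/(1+\phi(\zeta))$ awkward: its real part $\Re\phi(\zeta)=\log|1+\rme^{\zeta}|$ can be large and negative as $x\to-\infty$, so the naive bound $|z/(1+z)|\le1$ fails throughout the strip, and one must estimate $|\phi(\zeta)|$ and $|1+\phi(\zeta)|$ separately and recombine them with the decay $|1+\rme^{\zeta}|^{-\beta}$. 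Carrying this out on the widened strip $|\Im\zeta|<d$ with $d$ possibly near $\pi$ — rather than capped at $\pi/2$ as in the Stenger case — is precisely what forces the appearance of $(\rme/(\rme-1))^{\mu/2}$ (here $\rme/(\rme-1)=1/(1-\rme^{-1})$, the reciprocal of $1-\rme^{-\phi(\zeta)}$ at $\phi(\zeta)=1$), and verifying this value is where the bulk of the delicate constant-tracking lies. Once the boundary integral is bounded by $4K\{\cos(d/2)\}^{-(\alpha+\beta)}(\rme/(\rme-1))^{\mu/2}/\mu$, doubling it and substituting into the standard bound completes the proof.
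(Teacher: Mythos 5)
Your overall architecture matches the paper's: reduce to the classical $\Hone(\domD_d)$ discretization theorem, then bound the boundary-integral norm of $F=f\circ\phi$ using \eqref{eq:f-bound}. But the technical core of the lemma is left unproven. The factor $|z/(1+z)|^{\alpha}$ with $z=\phi(\zeta)$ is exactly as awkward as you say, but the paper does not ``estimate $|\phi(\zeta)|$ and $|1+\phi(\zeta)|$ separately'': it proves the combined inequality
\[
\left|\frac{\log(1+\rme^{\zeta})}{1+\log(1+\rme^{\zeta})}\cdot\frac{\rme^{-l}+\rme^{\zeta}}{\rme^{\zeta}}\right|\le 1,
\qquad l=\log\!\left(\frac{\rme}{\rme-1}\right),
\]
on all of $\overline{\domD_{\pi}}$ (Lemma~\ref{lem:essential-bound}), via the maximum modulus principle and a lengthy case analysis occupying Section~\ref{sec:essential-ineq}. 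That single inequality converts the $\alpha$-factor into $|1+\rme^{-(\zeta+l)}|^{-\alpha}$, which Lemma~\ref{lem:exp-bound} then turns into $(1+\rme^{-(x+l)})^{-\alpha}\{\cos(y/2)\}^{-\alpha}$ --- simultaneously supplying the $\{\cos(d/2)\}^{-\alpha}$ factor, the exponential decay as $x\to-\infty$, and (after the shift $x=t-l/2$ and symmetrization) the constant $(\rme/(\rme-1))^{\mu/2}$. Separate bounds on $|\phi|$ and $|1+\phi|$ do not obviously produce decay in $x$ of the required form, and you give no argument for them; observing that this is ``where the bulk of the delicate constant-tracking lies'' names the gap rather than closing it.

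There is also a quantitative problem in your reduction step. Your quoted ``standard bound'' is not the standard bound: Stenger's theorem (Theorem~\ref{Thm:Sinc-Infinite-Sum-Approx} in the paper) has $\pi d(1-\rme^{-2\pi d/h})$ in the denominator, not $2\pi d(1-\rme^{-2\pi d/h})$; equivalently it reads $\mathcal{N}_1(F,d)/(2\pi d\sinh(\pi d/h))$, and your version is a factor of $2$ sharper with no justification. You compensate by targeting a norm bound of $8K\{\cos(d/2)\}^{-(\alpha+\beta)}(\rme/(\rme-1))^{\mu/2}/\mu$ (each horizontal integral bounded by $4K\cdots/\mu$, then doubled), whereas the paper bounds the \emph{sum} of both horizontal integrals by $4K\cdots/\mu$. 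Your two factor-of-two discrepancies cancel, so the final constant agrees with the lemma, but the proof as written rests on a citation that is false as stated. With the correct discretization theorem your norm target must be $4K\cdots/\mu$, which your sketch does not establish.
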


We estimate the truncation error as follows.
The proof is provided in Section~\ref{sec:truncation}.
\begin{lem}
\label{lem:truncation}
Let the assumptions of Theorem~\ref{thm:new} be fulfilled.
Then, putting $F(x)=f(\phi(x))$, we have
\[
\sup_{x\in\mathbb{R}}
\left|\sum_{k=-\infty}^{-M-1}F(kh)S(k,h)(x)
+\sum_{k=N+1}^{\infty}F(kh)S(k,h)(x)
\right|
\leq \frac{2K}{\mu h} \rme^{-\mu n h}.
\]
\end{lem}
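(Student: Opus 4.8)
The plan is to bound the truncation error entirely on the real line, where the Sinc functions are uniformly controlled. First I would invoke the elementary bound $|S(k,h)(x)|\le 1$, valid for every $x\in\mathbb{R}$ and every $k$, which follows from $|\sin\theta/\theta|\le 1$. This lets me discard the oscillatory factors and majorize the supremum over $x$ by the two tail sums $\sum_{k=-\infty}^{-M-1}|F(kh)|$ and $\sum_{k=N+1}^{\infty}|F(kh)|$, with $F(x)=f(\phi(x))$.

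The second, and genuinely essential, step is to convert the analytic bound~\eqref{eq:f-bound} into clean pointwise estimates for $F$ on the real axis. Since $\phi(x)=\log(1+\rme^x)\in(0,\infty)$ for real $x$, I would start from $|F(x)|\le K\bigl(\tfrac{\phi(x)}{1+\phi(x)}\bigr)^{\alpha}\rme^{-\beta\phi(x)}$ and estimate the two factors separately. For the left tail I use $\frac{\phi(x)}{1+\phi(x)}\le\phi(x)=\log(1+\rme^x)\le\rme^{x}$ together with $\rme^{-\beta\phi(x)}=(1+\rme^x)^{-\beta}\le 1$, giving $|F(x)|\le K\rme^{\alpha x}$; for the right tail I use $\frac{\phi(x)}{1+\phi(x)}\le 1$ together with $(1+\rme^x)^{-\beta}\le\rme^{-\beta x}$, giving $|F(x)|\le K\rme^{-\beta x}$. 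These are exactly the decay rates $\rme^{\alpha x}$ as $x\to-\infty$ and $\rme^{-\beta x}$ as $x\to+\infty$ that the Sinc machinery requires.

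With these estimates the tail sums become geometric series. Summing them and applying the convenient inequality $\frac{1}{\rme^{u}-1}\le\frac1u$ (valid for $u>0$), I would obtain $\sum_{k=-\infty}^{-M-1}|F(kh)|\le\frac{K}{\alpha h}\rme^{-\alpha Mh}$ and $\sum_{k=N+1}^{\infty}|F(kh)|\le\frac{K}{\beta h}\rme^{-\beta Nh}$. The final step is to substitute the prescribed $M$, $N$ from~\eqref{eq:def-MN} and split into its two cases. When $\mu=\alpha$ one has $M=n$, so the left sum is exactly $\frac{K}{\mu h}\rme^{-\mu nh}$, while $N\ge\alpha n/\beta$ forces $\beta N\ge\mu n$ and $1/\beta\le 1/\mu$, so the right sum is also at most $\frac{K}{\mu h}\rme^{-\mu nh}$; the case $\mu=\beta$ is symmetric. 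Adding the two contributions yields the claimed $\frac{2K}{\mu h}\rme^{-\mu nh}$.

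I do not anticipate a serious obstacle, as the derivation is short; the only point demanding care is the passage from~\eqref{eq:f-bound} to the pointwise estimates, since it is there that the specific behaviour of $\phi$—in particular the inequalities $\log(1+\rme^x)\le\rme^x$ and $(1+\rme^x)^{-\beta}\le\rme^{-\beta x}$—must be used to produce precisely the $\alpha$ and $\beta$ decay rates on the two half-lines. Matching the constant $\frac{2K}{\mu h}$ exactly through the case analysis of~\eqref{eq:def-MN} is routine but must be carried out cleanly, using $1/\beta\le1/\mu$ and $1/\alpha\le1/\mu$ in the two respective cases.
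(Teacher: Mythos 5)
Your proof is correct, and its skeleton coincides with the paper's: bound $|S(k,h)(x)|\le 1$, reduce the supremum to the two tail sums of $|F(kh)|$, establish the decay rates $K\rme^{\alpha x}$ as $x\to-\infty$ and $K\rme^{-\beta x}$ as $x\to+\infty$, sum the tails, and finish with the case analysis of~\eqref{eq:def-MN}. The one place where you genuinely diverge is the pointwise estimate of $F$ on the real line. The paper routes both tails through a single uniform inequality, namely~\eqref{eq:bound-func-on-real}, i.e.\ $\frac{\log(1+\rme^x)}{1+\log(1+\rme^x)}\le\frac{\rme^x}{1+\rme^x}$, which it states and proves as a separate lemma via $\rme^t-t-1\ge 0$; this yields $|F(kh)|\le K\bigl(\tfrac{\rme^{kh}}{1+\rme^{kh}}\bigr)^{\alpha}(1+\rme^{kh})^{-\beta}$, from which both tails are then extracted. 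You instead tailor the estimate to each tail separately, using only $t/(1+t)\le\min\{t,1\}$, $\log(1+u)\le u$, and $(1+\rme^x)^{-\beta}\le\min\{1,\rme^{-\beta x}\}$, so no auxiliary lemma is needed at all. A second, cosmetic difference: the paper bounds the tail sums by comparison with $\frac1h\int_{Nh}^{\infty}\rme^{-\beta x}\diff x$, while you sum the geometric series and apply $\rme^u-1\ge u$; the two devices give identical constants. What the paper's version buys is structural parallelism with the discretization-error analysis, where the complex-strip analogue of~\eqref{eq:bound-func-on-real} (Lemma~\ref{lem:essential-bound}) is genuinely unavoidable; what yours buys is a shorter, self-contained real-line argument. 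Your final bookkeeping with~\eqref{eq:def-MN} --- $\alpha M\ge\mu n$, $\beta N\ge\mu n$, and $1/\alpha,\,1/\beta\le 1/\mu$ --- matches the paper's exactly, so the constant $\frac{2K}{\mu h}\rme^{-\mu nh}$ comes out the same.
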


Combining Lemmas~\ref{lem:discretization} and~\ref{lem:truncation}
with $h$ in~\eqref{eq:def-h},
we establish Theorem~\ref{thm:new}, which completes the proof.

\subsection{Estimate of the discretization error}
\label{sec:discretization}

The following function space is required to estimate the discretization error.

\begin{defn}
Let $\domD_d(\epsilon)$ be a rectangular domain defined
for $0<\epsilon<1$ by
\[
\domD_d(\epsilon)
= \{\zeta\in\mathbb{C}:|\Re\zeta|<1/\epsilon,\, |\Im\zeta|<d(1-\epsilon)\}.
\]
Then, $\Hone(\domD_d)$ denotes the family of all functions $F$
that are analytic in $\domD_d$ such that the norm $\mathcal{N}_1(F,d)$ is finite, where
\begin{equation}
\mathcal{N}_1(F,d)
=\lim_{\epsilon\to 0}\oint_{\partial \domD_d(\epsilon)} |F(\zeta)||\divv \zeta|.
\label{eq:Def-N_1}
\end{equation}
\end{defn}

The discretization error for
a function $F$ belonging to $\Hone(\domD_d)$
has been estimated as follows.

\begin{thm}[Stenger~{\cite[Theorem~3.1.3]{stenger93:_numer_method}}]
\label{Thm:Sinc-Infinite-Sum-Approx}
Let $F\in\Hone(\domD_d)$. Then,
\[
    \sup_{x\in\mathbb{R}}
    \left|F(x)-\sum_{k=-\infty}^{\infty}F(kh)S(k,h)(x)\right|
\leq\frac{\mathcal{N}_1(F,d)}{\pi d(1-\rme^{-2\pi d/h})}\rme^{-\pi d/h}.
\]
\end{thm}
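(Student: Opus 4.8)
The plan is to represent the pointwise interpolation error as a contour integral around the strip $\domD_d$ and then bound that integral in terms of the $\Hone$ norm. For a fixed real $x$, I would introduce the auxiliary integrand
\[
\Phi(\zeta)=\frac{F(\zeta)}{\sin(\pi\zeta/h)}\cdot\frac{\sin(\pi x/h)}{\zeta-x},
\]
whose poles inside $\domD_d$ are exactly the sampling points $\zeta=kh$ (from the zeros of $\sin(\pi\zeta/h)$) together with the single point $\zeta=x$. A direct residue computation gives $\operatorname{Res}_{\zeta=kh}\Phi=-F(kh)S(k,h)(x)$ and $\operatorname{Res}_{\zeta=x}\Phi=F(x)$, so the sum of all residues inside the strip equals precisely the discretization error $F(x)-\sum_{k}F(kh)S(k,h)(x)$.

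Next I would apply the residue theorem on the truncated rectangle $\domD_d(\epsilon)$ and let $\epsilon\to0$. Orienting $\partial\domD_d(\epsilon)$ counterclockwise, only the two horizontal edges $\Im\zeta=\pm d$ survive in the limit, yielding
\[
F(x)-\sum_{k=-\infty}^{\infty}F(kh)S(k,h)(x)
=\frac{1}{2\pi\imnum}\left(\int_{-\infty}^{\infty}\Phi(t-\imnum d)\diff t-\int_{-\infty}^{\infty}\Phi(t+\imnum d)\diff t\right).
\]
Taking absolute values, I would then bound the integrand on each horizontal line using three elementary estimates: $|\sin(\pi x/h)|\le1$; the identity $|\sin(a+\imnum b)|^2=\sin^2 a+\sinh^2 b$ (with $a=\pi t/h$, $b=\pm\pi d/h$), which gives $|\sin(\pi\zeta/h)|\ge\sinh(\pi d/h)$ on $\Im\zeta=\pm d$; and $|\zeta-x|=\sqrt{(\Re\zeta-x)^2+d^2}\ge d$. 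These yield
\[
\left|F(x)-\sum_{k=-\infty}^{\infty}F(kh)S(k,h)(x)\right|
\le\frac{1}{2\pi d\sinh(\pi d/h)}\left(\int_{-\infty}^{\infty}|F(t-\imnum d)|\diff t+\int_{-\infty}^{\infty}|F(t+\imnum d)|\diff t\right).
\]
The parenthesized sum of line integrals is exactly the limiting value of $\mathcal{N}_1(F,d)$ in~\eqref{eq:Def-N_1}, and rewriting $\sinh(\pi d/h)=\tfrac12\rme^{\pi d/h}(1-\rme^{-2\pi d/h})$ turns the prefactor into $\rme^{-\pi d/h}/\{\pi d(1-\rme^{-2\pi d/h})\}$, which is precisely the claimed bound.

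The hard part will be justifying the passage $\epsilon\to0$ rigorously rather than the residue bookkeeping itself. Two analytic points need care: that the vertical edges $\Re\zeta=\pm1/\epsilon$ contribute nothing in the limit, and that the sum of residues genuinely converges to the full cardinal series. For the vertical edges I would invoke the defining property of $\Hone(\domD_d)$ — finiteness of the contour integral in~\eqref{eq:Def-N_1} forces $|F|$ to be integrable along the boundary, so the vertical pieces vanish along a suitable sequence $\epsilon\to0$, chosen if necessary so that the vertical lines avoid the zeros of $\sin(\pi\zeta/h)$, where $|\sin(\pi\zeta/h)|$ stays bounded below. The same boundary integrability supplied by the $\Hone$ norm underwrites a dominated-convergence argument legitimizing the interchange of summation and the limit. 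Once these points are settled, the estimate above follows directly.
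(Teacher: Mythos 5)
This theorem is imported verbatim from Stenger's book (Theorem~3.1.3) and the paper offers no proof of its own, so there is nothing to diverge from: your contour-integral argument — the integrand $F(\zeta)\sin(\pi x/h)/[(\zeta-x)\sin(\pi\zeta/h)]$, the residues $F(x)$ and $-F(kh)S(k,h)(x)$, the vanishing vertical edges along lines $\Re\zeta=(k+\tfrac12)h$ where $|\sin(\pi\zeta/h)|\geq 1$, and the bounds $|\sin(\pi(t\pm\imnum d)/h)|\geq\sinh(\pi d/h)$, $|\zeta-x|\geq d$ with $\sinh(\pi d/h)=\tfrac12\rme^{\pi d/h}(1-\rme^{-2\pi d/h})$ — is exactly the classical proof given there, and it is correct. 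The only slip is the word ``exactly'': the two horizontal line integrals are bounded above by, not equal to, $\mathcal{N}_1(F,d)$, since the definition~\eqref{eq:Def-N_1} also carries the nonnegative limits of the vertical-edge integrals (cf.~\eqref{eq:None-F-expression}), but the inequality runs in the direction you need, so the bound stands.
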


According to Theorem~\ref{Thm:Sinc-Infinite-Sum-Approx},
Lemma~\ref{lem:discretization} is derived if the next lemma is shown.
\begin{lem}
\label{lem:bound-None-F}
Let the assumptions of Theorem~\ref{thm:new} be fulfilled.
Then, putting $F(x)=f(\phi(x))$, we have $F\in\Hone(\domD_d)$ and
\begin{equation}
 \mathcal{N}_1(F,d)
\leq \frac{4K}{\mu\cos^{\alpha+\beta}(d/2)}
\left(\frac{\rme}{\rme - 1}\right)^{\mu/2}.
\label{eq:bound-None-F}
\end{equation}
\end{lem}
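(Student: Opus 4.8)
The plan is to compute the contour integral defining $\mathcal{N}_1(F,d)$ directly: reduce it to two integrals over the horizontal lines $\Im\zeta=\pm d$, and then bound the integrand pointwise using the growth estimate~\eqref{eq:f-bound} together with explicit control of $\phi$ on those lines. First I would argue that the two vertical sides of $\partial\domD_d(\epsilon)$ do not contribute in the limit $\epsilon\to 0$: on the right side $\Re\zeta=1/\epsilon$ the estimate~\eqref{eq:f-bound} forces $|F(\zeta)|$ to decay like $\rme^{-\beta/\epsilon}$, and on the left side $\Re\zeta=-1/\epsilon$ like $\rme^{-\alpha/\epsilon}$, while each side has length at most $2d$, so both contributions vanish. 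This simultaneously shows that $\mathcal{N}_1(F,d)$ is finite, hence $F\in\Hone(\domD_d)$, and identifies
\[
\mathcal{N}_1(F,d)=\int_{-\infty}^{\infty}|F(x+\imnum d)|\diff x+\int_{-\infty}^{\infty}|F(x-\imnum d)|\diff x.
\]
Since the right-hand side of~\eqref{eq:f-bound} depends on $z$ only through the conjugation-invariant quantities $|z/(1+z)|$ and $|\rme^{-z}|$, and $\phi(\bar\zeta)=\overline{\phi(\zeta)}$, the two line integrals obey the same bound, which produces the outer factor $2$.

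Next, on the line $\Im\zeta=d$ I would set $z=\phi(\zeta)=\log(1+\rme^\zeta)$ and split the estimate~\eqref{eq:f-bound} into the factors $|z/(1+z)|^\alpha$ and $|\rme^{-z}|^\beta=|1+\rme^\zeta|^{-\beta}$. The clean ingredient is the geometric inequality
\[
|1+\rme^{x+\imnum d}|^2=1+2\rme^x\cos d+\rme^{2x}\ge \cos^2(d/2)\,(1+\rme^x)^2,
\]
which is equivalent to $\tfrac{1-\cos d}{2}(1-\rme^x)^2\ge 0$ and is therefore valid for all $x$ and all $0<d<\pi$. It yields $|\rme^{-z}|^\beta\le \cos^{-\beta}(d/2)(1+\rme^x)^{-\beta}$, supplies the factor $\cos^{-\beta}(d/2)$, and is exactly what enlarges the admissible range of $d$ to $(0,\pi)$.

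The main obstacle is the remaining factor $|z/(1+z)|^\alpha$. Unlike $|\rme^{-z}|$, the quantity $z/(1+z)=\log(1+\rme^\zeta)/(1+\log(1+\rme^\zeta))$ is not a simple elementary modulus, and it must be bounded uniformly on the line while tracking a sharp constant. I expect to establish a bound of the shape $|z/(1+z)|\le \cos^{-1}(d/2)\cdot[\rme^x/(1+\rme^x)]\cdot(\text{const})$, so that $|z/(1+z)|^\alpha$ contributes the remaining $\cos^{-\alpha}(d/2)$, behaves like $\rme^{\alpha x}$ as $x\to-\infty$, and stays bounded as $x\to+\infty$; the residual constant, optimized along the line (the extremal configuration occurring near $\Re z\approx 1$, i.e.\ $1+\rme^\zeta\approx\rme$), is what condenses into the factor $(\rme/(\rme-1))^{\mu/2}$. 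This is the delicate part, both because the supremum of $|z/(1+z)|$ must be located on a complex line and because the exponent must be reduced to $\mu=\min\{\alpha,\beta\}$ rather than $\alpha$, which presumably requires treating the cases $\mu=\alpha$ and $\mu=\beta$ separately.

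Finally, combining the two factor bounds gives, on each line,
\[
|F(x\pm\imnum d)|\le \frac{K}{\cos^{\alpha+\beta}(d/2)}\Bigl(\frac{\rme}{\rme-1}\Bigr)^{\mu/2}g(x),
\]
where $g(x)$ behaves like $\rme^{\alpha x}$ for $x<0$ and like $\rme^{-\beta x}$ for $x>0$. Splitting $\int_{-\infty}^{\infty}g$ at $x=0$ and using $\int_{-\infty}^{0}\rme^{\alpha x}\diff x=1/\alpha\le 1/\mu$ and $\int_{0}^{\infty}\rme^{-\beta x}\diff x=1/\beta\le 1/\mu$ bounds each line integral by $2/\mu$ times the displayed constant; together with the factor $2$ from the two lines, this produces $\tfrac{4K}{\mu\cos^{\alpha+\beta}(d/2)}(\rme/(\rme-1))^{\mu/2}$, which is exactly~\eqref{eq:bound-None-F} and completes the proof.
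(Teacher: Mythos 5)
Your overall architecture---discarding the vertical sides of $\partial\domD_d(\epsilon)$, using conjugation symmetry to reduce to one horizontal line, bounding $|\rme^{-z}|^{\beta}=|1+\rme^{\zeta}|^{-\beta}$ via $|1+\rme^{x+\imnum d}|\geq\cos(d/2)(1+\rme^{x})$, and then integrating---is the same as the paper's, and your proof of that cosine inequality is correct (it is exactly the paper's Lemma~\ref{lem:exp-bound}). But the proposal has a genuine gap at its crux: the bound on $|z/(1+z)|=\left|\log(1+\rme^{\zeta})/(1+\log(1+\rme^{\zeta}))\right|$ along the line $\Im\zeta=d$ is never established; you only say you ``expect'' a bound of a certain shape and call it the delicate part. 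That missing step is precisely the hard core of the paper's argument: it is Lemma~\ref{lem:essential-bound}, the inequality
\[
\left|\frac{\log(1+\rme^{\zeta})}{1+\log(1+\rme^{\zeta})}\cdot\frac{\rme^{-l}+\rme^{\zeta}}{\rme^{\zeta}}\right|\leq 1,
\qquad l=\log\!\left(\frac{\rme}{\rme-1}\right),
\]
valid on all of $\overline{\domD_{\pi}}$, which combined with Lemma~\ref{lem:exp-bound} gives $|z/(1+z)|\leq\{(1+\rme^{-(x+l)})\cos(y/2)\}^{-1}$. Its proof occupies all of Section~\ref{sec:essential-ineq}: a maximum-modulus reduction to the boundary $\Im\zeta=\pm\pi$, the substitution $t=\log|\rme^{x}-1|$ with separate treatment of $x>0$ and $x<0$ (each with further subcases), and the auxiliary Lemma~\ref{lem:final-prepare}, which itself requires a careful sign analysis of a transcendental function. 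Without this ingredient, or an equivalent one, your pointwise estimate on the line---and hence the whole proof---does not go through, and nothing in the sketch indicates how you would obtain it.

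Two smaller inaccuracies are worth flagging. First, your heuristic for the constant $(\rme/(\rme-1))^{\mu/2}$ is off: in the paper it does not come from optimizing $|z/(1+z)|$ near $\Re z\approx 1$, but mechanically from the final integration. Once one has $|F(x+\imnum y)|\leq K\cos^{-(\alpha+\beta)}(y/2)\,(1+\rme^{-(x+l)})^{-\alpha}(1+\rme^{x})^{-\beta}$, one lowers both exponents to $\mu$ (legitimate simply because both bases exceed $1$; no case split on $\mu=\alpha$ versus $\mu=\beta$ is needed, contrary to what you presume), recenters by $x=t-l/2$, symmetrizes, and bounds the resulting integral by $\rme^{\mu l/2}/\mu=(\rme/(\rme-1))^{\mu/2}/\mu$, which is where both the constant and the $1/\mu$ arise. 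Second, your claim that the vertical sides contribute $\Order(\rme^{-\beta/\epsilon})$ and $\Order(\rme^{-\alpha/\epsilon})$ itself requires a pointwise bound on $|F|$ that is uniform in $y$, which in the paper is derived from the very estimate that hinges on Lemma~\ref{lem:essential-bound}; so the vanishing of the side terms cannot be cleanly separated from the missing lemma either.
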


The following two lemmas are essential to show Lemma~\ref{lem:bound-None-F}.

\begin{lem}[Okayama et al.~{\cite[Lemma 4.21]{okayama13:_error_sinc}}]
\label{lem:exp-bound}
For all $x\in\mathbb{R}$ and $y\in(-\pi,\pi)$, we have
\begin{align*}
\left|\frac{1}{1+\rme^{x+\imnum y}}\right|
&\leq \frac{1}{(1+\rme^x)\cos(y/2)},\\
\left|\frac{1}{1+\rme^{-(x+\imnum y)}}\right|
&\leq \frac{1}{(1+\rme^{-x})\cos(y/2)}.
\end{align*}
\end{lem}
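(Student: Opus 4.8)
The plan is to establish both inequalities by a direct computation of the modulus of the complex denominator, reducing each claim to an elementary real inequality. First I would focus on the first bound and square both sides, so that it suffices to show $|1+\rme^{x+\imnum y}|^2 \geq (1+\rme^x)^2\cos^2(y/2)$; taking reciprocals and square roots at the end is legitimate because $\cos(y/2)>0$ for $y\in(-\pi,\pi)$ (since then $y/2\in(-\pi/2,\pi/2)$), so both sides are nonnegative.

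The key computation is to expand $|1+\rme^{x+\imnum y}|^2 = (1+\rme^x\cos y)^2 + (\rme^x\sin y)^2 = 1 + 2\rme^x\cos y + \rme^{2x}$, using $\cos^2 y + \sin^2 y = 1$. On the right-hand side I would rewrite $\cos^2(y/2) = (1+\cos y)/2$ via the half-angle identity, giving $(1+\rme^x)^2(1+\cos y)/2$. Writing $a=\rme^x$ and $c=\cos y$, the difference of the two sides (after clearing the factor $1/2$) simplifies, and I expect the whole thing to factor cleanly as $(1-\rme^x)^2(1-\cos y)$, which is manifestly nonnegative since $(1-\rme^x)^2\geq 0$ and $1-\cos y\geq 0$. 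This verifies the squared inequality and hence the first bound.

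For the second inequality I would avoid repeating the computation and instead invoke symmetry: applying the first bound with $x$ replaced by $-x$ and $y$ replaced by $-y$ yields $|1/(1+\rme^{-x-\imnum y})| \leq 1/((1+\rme^{-x})\cos(-y/2))$, and since cosine is even, $\cos(-y/2)=\cos(y/2)$, which is exactly the claimed estimate for $1/(1+\rme^{-(x+\imnum y)})$. Note also that $-y\in(-\pi,\pi)$ whenever $y\in(-\pi,\pi)$, so the hypotheses of the first bound remain satisfied.

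The only delicate point is the algebraic factorization in the second step; I expect it to collapse to $(1-\rme^x)^2(1-\cos y)$ after a short expansion, but care is needed to track the cross terms correctly so that the sign is transparent. Everything else—the modulus expansion, the half-angle substitution, and the symmetry argument for the second inequality—is routine, so the factorization is where I would concentrate the verification.
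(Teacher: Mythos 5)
Your argument is correct, but there is nothing in this paper to compare it against: Lemma~\ref{lem:exp-bound} is quoted from Okayama et al.~\cite{okayama13:_error_sinc} and used as a black box, with no proof given here. Your computation checks out: with $a=\rme^x$ and $c=\cos y$ one has $|1+\rme^{x+\imnum y}|^2 = (1+a\cos y)^2+(a\sin y)^2 = 1+2ac+a^2$, and
\[
 2\left(1+2ac+a^2\right)-(1+a)^2(1+c) = (1-a)^2(1-c)\geq 0,
\]
which is exactly the factorization you anticipated; since $\cos(y/2)>0$ for $y\in(-\pi,\pi)$, taking square roots and reciprocals yields the first inequality, and the substitution $(x,y)\mapsto(-x,-y)$ together with the evenness of cosine gives the second, because $\rme^{-(x+\imnum y)}=\rme^{-x+\imnum(-y)}$. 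A marginally slicker arrangement of the same idea avoids clearing the factor $1/2$: using $\cos y = 2\cos^2(y/2)-1$, write $|1+\rme^{x+\imnum y}|^2 = (1-\rme^x)^2 + 4\rme^x\cos^2(y/2)$, and then bound
\[
 (1+\rme^x)^2\cos^2(y/2) = \bigl[(1-\rme^x)^2+4\rme^x\bigr]\cos^2(y/2)
 \leq (1-\rme^x)^2 + 4\rme^x\cos^2(y/2),
\]
since $\cos^2(y/2)\leq 1$. Either way the lemma is elementary, and your proof is complete.
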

\begin{lem}
\label{lem:essential-bound}
For all $\zeta\in\overline{\domD_{\pi}}$, it holds that
\begin{equation}
 \left|
  \frac{\log(1+\rme^{\zeta})}{1+\log(1+\rme^{\zeta})}
  \cdot \frac{\rme^{-l} + \rme^{\zeta}}{\rme^{\zeta}}
\right|
\leq 1,
\label{eq:essential-ineq}
\end{equation}
where $l = \log(\rme/(\rme - 1))$.
\end{lem}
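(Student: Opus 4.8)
The plan is to prove the equivalent, stronger statement that the analytic function
\[
G(\zeta)=\frac{\log(1+\rme^{\zeta})}{1+\log(1+\rme^{\zeta})}\cdot\frac{\rme^{-l}+\rme^{\zeta}}{\rme^{\zeta}}
\]
satisfies $|G(\zeta)|\le 1$ throughout $\overline{\domD_\pi}$, and to obtain this from the maximum modulus principle. First I would check that $G$ is analytic on the open strip $\domD_\pi$: there $1+\rme^{\zeta}\neq 0$, so $\phi(\zeta)=\log(1+\rme^{\zeta})$ is single-valued and analytic, $\rme^{\zeta}\neq 0$, and $1+\phi(\zeta)=0$ would force $\rme^{\zeta}=\rme^{-1}-1<0$, which is impossible for $|\Im\zeta|<\pi$; hence $G$ has no poles inside. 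Next I would verify that $G$ is bounded on the strip: as $\Re\zeta\to+\infty$ both factors tend to $1$; as $\Re\zeta\to-\infty$ one has $\phi(\zeta)\sim\rme^{\zeta}$, so the first factor behaves like $\rme^{\zeta}$ while the second grows like $\rme^{-l}\rme^{-\zeta}$, the product tending to the finite limit $\rme^{-l}$; and near the branch point $\zeta=\pm\imnum\pi$ the first factor tends to $1$ (as $\phi\to\infty$) and the second to $1/\rme$, so $G$ stays bounded. With $G$ bounded and analytic on the strip, the Phragm\'en--Lindel\"of form of the maximum principle reduces the claim to $\limsup_{\zeta\to\partial\domD_\pi}|G(\zeta)|\le 1$ on the two lines $\Im\zeta=\pm\pi$.

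Since $\overline{G(\bar\zeta)}=G(\zeta)$, the modulus of $G$ is symmetric under conjugation, so it suffices to treat $\Im\zeta=\pi$. Writing $\rme^{\zeta}=-r$ with $r=\rme^{\Re\zeta}>0$, I would split into two regimes. For $0<r<1$ the value is real; putting $v=1-r\in(0,1)$ and using $\rme^{-l}=1-1/\rme$ turns $G$ into $\frac{\log v}{1+\log v}\cdot\frac{1/\rme-v}{1-v}$, and a short computation gives
\[
1-G(\zeta)=\frac{(1-v)+(1-1/\rme)\log v}{(1+\log v)(1-v)}.
\]
A brief sign analysis then shows that $G$ is positive and that $1-G\ge 0$: the numerator $N(v)=(1-v)+(1-1/\rme)\log v$ and the denominator $(1+\log v)(1-v)$ both vanish exactly at $v=1/\rme$ and share the same sign on each of $(0,1/\rme)$ and $(1/\rme,1)$, so $0<G\le 1$. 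For $r\ge 1$ the logarithm acquires an imaginary part, $\phi(\zeta)=\log(r-1)+\imnum\pi$; setting $\rho=\log(r-1)$, the inequality $|G|\le 1$ reduces to
\[
(\rho^2+\pi^2)\,\rme^{-l}(\rme^{-l}-2r)\le(2\rho+1)r^2,
\]
which I would verify by noting that it is immediate for $\rho\ge-1/2$ (left side negative, right side nonnegative), while for $\rho<-1/2$ it follows from the $\pi^2$ term, since $r=1+\rme^{\rho}$ stays bounded whereas $\rho^2+\pi^2$ dominates $|2\rho+1|$.

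The main obstacle is the boundary $\Im\zeta=\pm\pi$, where $\phi$ has its branch point at $\zeta=\pm\imnum\pi$ and where the factor $\phi/(1+\phi)$ develops a pole at $\Re\zeta=-l$. The whole argument hinges on the choice $l=\log(\rme/(\rme-1))$, equivalently $\rme^{-l}=1-1/\rme$: it places the zero of $\rme^{-l}+\rme^{\zeta}$ exactly at the pole of $\phi/(1+\phi)$ on this line, making the singularity removable (the limiting value there is $1/(\rme-1)<1$) and collapsing the case $0<r<1$ to the clean sign-matching above. I expect the two most delicate points to be justifying the reduction to the boundary rigorously in the presence of the branch point and the unbounded strip---handled by Phragm\'en--Lindel\"of together with the boundedness estimate---and confirming the inequality for $r\ge 1$, where the first factor genuinely exceeds $1$ and only the $\pi^2$ contribution coming from $\Im\phi=\pi$ restores $|G|\le 1$.
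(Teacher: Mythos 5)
Your proposal is correct, and its skeleton coincides with the paper's: reduce to the boundary lines $\Im\zeta=\pm\pi$, discard one line by conjugation symmetry, split according to the sign of $\Re\zeta$ (your $r\gtrless 1$ is the paper's $x\gtrless 0$), and in the regime $r\geq 1$ split again at $\rho=-1/2$, which is exactly the paper's split at $t=-1/2$; your observation that $l$ is chosen so that the zero of $\rme^{-l}+\rme^{\zeta}$ cancels the pole of the first factor, with limiting value $1/(\rme-1)$, is the same phenomenon the paper handles via L'H\^{o}pital at $t=-1$. Two points differ genuinely. First, you are more careful about the unboundedness of the strip: the paper appeals to the maximum modulus principle outright, whereas you verify that $G$ is bounded (limits $1$, $\rme^{-l}$ and $1/\rme$ at the various ends) and then invoke Phragm\'en--Lindel\"of; on an unbounded domain this supplement is genuinely needed, so your reduction is the more rigorous one. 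Second, and more substantively, for $0<r<1$ you bypass the paper's hardest ingredient: the paper proves $|h(t)|\leq 1$ by establishing $h'(t)\geq 0$, which rests on the separate calculus Lemma~\ref{lem:final-prepare} (the sign analysis of $q$ with the roots $a$, $b$, $c$), while you write $1-G=N(v)/D(v)$ with $N(v)=(1-v)+(1-1/\rme)\log v$ and $D(v)=(1+\log v)(1-v)$ and note that $N$ and $D$ vanish together at $v=1/\rme$ and have matching signs on $(0,1/\rme)$ and $(1/\rme,1)$ (clear since $N$ is concave with $N(1/\rme)=N(1)=0$), plus $G>0$; this is shorter and eliminates the auxiliary lemma entirely. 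Conversely, in the regime $\rho<-1/2$ the paper's argument is the cleaner one: completing the square yields the exact bound $(t^2+\pi^2)/((1+t)^2+\pi^2)\leq \rme$ matched against the exact value $1/\rme$ of the squared second factor, whereas your domination argument (``$\rho^2+\pi^2$ dominates $|2\rho+1|$ with $r$ bounded'') needs explicit constants to close; it does close (with $1\leq r<1+\rme^{-1/2}$ the requirement is roughly $s^2-6s+\pi^2+3\geq 0$ for $s=-\rho$, whose discriminant is negative), but you should carry that computation out rather than leave it as an assertion.
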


We prefer to give the proof of Lemma~\ref{lem:essential-bound}
at the end of this section (Section~\ref{sec:essential-ineq})
because it requires preparation and a long discussion.
If we accept this lemma here,
Lemma~\ref{lem:bound-None-F} is shown as follows.

\begin{proof}
Since $f$ is analytic in $\phi(\domD_d)$,
$F$ is analytic in $\domD_d$.
Therefore, the remaining task is to show~\eqref{eq:bound-None-F}.
From the definition~\eqref{eq:Def-N_1},
$\mathcal{N}_1(F,d)$ is expressed as
\begin{align}
\mathcal{N}_1(F,d)
&=\int_{-\infty}^{\infty}|F(x+\imnum d)|\diff x
+ \int_{-\infty}^{\infty}|F(x-\imnum d)|\diff x\nonumber\\
&\quad+\lim_{x\to-\infty}
\int_{-d}^d |F(x+\imnum y)|\diff y
 + \lim_{x\to\infty}
\int_{-d}^d |F(x+\imnum y)|\diff y.
\label{eq:None-F-expression}
\end{align}
First, we estimate $|F(x+\imnum y)|$.
Using~\eqref{eq:f-bound},~\eqref{eq:essential-ineq},
and Lemma~\ref{lem:exp-bound}, we obtain
\begin{align*}
|F(x+\imnum y)|
&\leq K
\left|
 \frac{\log(1+\rme^{x+\imnum y})}{1+\log(1+\rme^{x+\imnum y})}
\right|^{\alpha}
\left|
 \frac{1}{1+\rme^{x+\imnum y}}
\right|^{\beta}\nonumber\\
&\leq
\frac{K}{|1 + \rme^{-(x+l + \imnum y)}|^{\alpha}|1+\rme^{x+\imnum y}|^{\beta}}
\nonumber\\
&\leq
\frac{K}{(1+\rme^{-(x+l)})^{\alpha}(1+\rme^x)^{\beta}\cos^{\alpha+\beta}(y/2)},
\end{align*}
where $l=\log(\rme/(\rme - 1))$.
According to this estimate, the third and fourth terms of~\eqref{eq:None-F-expression}
are bounded as
\begin{align*}
&\lim_{x\to-\infty}
\int_{-d}^d |F(x+\imnum y)|\diff y
 + \lim_{x\to\infty}
\int_{-d}^d |F(x+\imnum y)|\diff y\\
&\leq \lim_{x\to-\infty}
\frac{K}{(1+\rme^{-(x+l)})^{\alpha}(1+\rme^x)^{\beta}}
\int_{-d}^d \frac{\divv y}{\cos^{\alpha+\beta}(y/2)}
+\lim_{x\to\infty}
\frac{K}{(1+\rme^{-(x+l)})^{\alpha}(1+\rme^x)^{\beta}}
\int_{-d}^d \frac{\divv y}{\cos^{\alpha+\beta}(y/2)}\\
&= 0.
\end{align*}
By the same estimate,
the first and second terms of~\eqref{eq:None-F-expression} are bounded as
\begin{align*}
&\int_{-\infty}^{\infty}|F(x+\imnum d)|\diff x
+ \int_{-\infty}^{\infty}|F(x-\imnum d)|\diff x\\
&\leq \frac{K}{\cos^{\alpha+\beta}(d/2)}
\int_{-\infty}^{\infty}
\frac{\divv x}{(1+\rme^{-(x+l)})^{\alpha}(1+\rme^x)^{\beta}}
+\frac{K}{\cos^{\alpha+\beta}(-d/2)}
\int_{-\infty}^{\infty}
\frac{\divv x}{(1+\rme^{-(x+l)})^{\alpha}(1+\rme^x)^{\beta}}\\
&\leq\frac{2K}{\cos^{\alpha+\beta}(d/2)}
\int_{-\infty}^{\infty}
\frac{\divv x}{(1+\rme^{-(x+l)})^{\mu}(1+\rme^x)^{\mu}},
\end{align*}
where $\mu=\min\{\alpha,\beta\}$.
By changing the variable $x$ as $x=t-(l/2)$, we obtain
\begin{align*}
\frac{2K}{\cos^{\alpha+\beta}(d/2)}
\int_{-\infty}^{\infty}
\frac{\divv x}{(1+\rme^{-(x+l)})^{\mu}(1+\rme^x)^{\mu}}
&=\frac{2K}{\cos^{\alpha+\beta}(d/2)}
\int_{-\infty}^{\infty}
\frac{\divv t}{(1+\rme^{-t-\frac{l}{2}})^{\mu}(1+\rme^{t-\frac{l}{2}})^{\mu}}\\
&=\frac{4K}{\cos^{\alpha+\beta}(d/2)}
\int_{0}^{\infty}
\frac{\divv t}{(1+\rme^{-t-\frac{l}{2}})^{\mu}(1+\rme^{t-\frac{l}{2}})^{\mu}}\\
&=\frac{4K}{\cos^{\alpha+\beta}(d/2)}
\int_{0}^{\infty}
\frac{\rme^{-\mu t}}
     {(1+\rme^{-t-\frac{l}{2}})^{\mu}(\rme^{-t}+\rme^{-\frac{l}{2}})^{\mu}}
     \diff t\\
&\leq\frac{4K}{\cos^{\alpha+\beta}(d/2)}
\int_{0}^{\infty}
\frac{\rme^{-\mu t}}
     {(1+0)^{\mu}(0+\rme^{-\frac{l}{2}})^{\mu}}
     \diff t\\
&=\frac{4K}{\cos^{\alpha+\beta}(d/2)}\cdot\frac{(\rme^{l})^{\mu/2}}{\mu},
\end{align*}
which is the desired result.
\end{proof}

\subsection{Estimate of the truncation error}
\label{sec:truncation}

The following lemma is useful to the proof of Lemma~\ref{lem:truncation}.

\begin{lem}
For all $x\in\mathbb{R}$, we have
\begin{equation}
\left|
\frac{\log(1+\rme^x)}{1+\log(1+\rme^x)}\cdot
\frac{1+\rme^x}{\rme^x}
\right|\leq 1.
\label{eq:bound-func-on-real}
\end{equation}
\end{lem}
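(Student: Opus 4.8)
The plan is to exploit the fact that on the real line every factor appearing in the product is positive, so that the modulus can be dropped immediately. For $x\in\mathbb{R}$ we have $\rme^x>0$, hence $1+\rme^x>1$ and $\log(1+\rme^x)>0$; consequently each of $\log(1+\rme^x)$, $1+\log(1+\rme^x)$, and $(1+\rme^x)/\rme^x$ is strictly positive. It therefore suffices to establish the inequality without the absolute-value signs.

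Next I would introduce the single variable $p=\log(1+\rme^x)$, which ranges over $(0,\infty)$ as $x$ ranges over $\mathbb{R}$. From $\rme^p=1+\rme^x$ we obtain $\rme^x=\rme^p-1$, so the awkward factor rewrites cleanly as $(1+\rme^x)/\rme^x=\rme^p/(\rme^p-1)$. The inequality to be shown then becomes
\[
\frac{p}{1+p}\cdot\frac{\rme^p}{\rme^p-1}\leq 1.
\]

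Since $p>0$, all denominators are positive, so I would clear them to obtain the equivalent inequality $p\,\rme^p\leq(1+p)(\rme^p-1)$. Expanding the right-hand side as $\rme^p-1+p\,\rme^p-p$ and cancelling the common term $p\,\rme^p$ reduces the claim to $0\leq \rme^p-1-p$, that is, to the elementary bound $\rme^p\geq 1+p$, which holds for every real $p$ (in particular for $p>0$). This completes the argument.

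The proof is essentially mechanical and presents no genuine obstacle. The only point demanding a little care is checking that the substitution $p=\log(1+\rme^x)$ is legitimate and that all quantities by which I multiply are positive, so that clearing denominators preserves the direction of the inequality. Once the expression is recast in terms of $p$, the entire statement collapses to the standard tangent-line estimate $\rme^p\geq 1+p$.
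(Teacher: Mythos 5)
Your proposal is correct and follows essentially the same route as the paper's own proof: both substitute $t=\log(1+\rme^x)$ to rewrite the quantity as $\frac{t}{1+t}\cdot\frac{\rme^t}{\rme^t-1}$ and reduce the claim to $\rme^t-t-1\geq 0$. The only cosmetic difference is that the paper verifies this last bound by a one-line derivative argument, whereas you cite it as the standard tangent-line estimate.
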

\begin{proof}
Putting $t=\log(1+\rme^x)$ and noting $t>0$,
we reformulate~\eqref{eq:bound-func-on-real} as
\begin{align*}
\left|
\frac{\log(1+\rme^x)}{1+\log(1+\rme^x)}\cdot
\frac{1+\rme^x}{\rme^x}
\right|
=\frac{t}{1+t}\cdot\frac{\rme^{t}}{\rme^{t} - 1} \leq 1
\quad \Leftrightarrow \quad \rme^t - t - 1\geq 0.
\end{align*}
Since $(\rme^t - t - 1)'=\rme^t - 1\geq 0$,
we have $\rme^t - t - 1\geq \rme^0 - 0 - 1=0$,
which is equivalent to~\eqref{eq:bound-func-on-real}.
\end{proof}

Using the above lemma, we prove Lemma~\ref{lem:truncation} as follows.

\begin{proof}
Using~\eqref{eq:f-bound},~\eqref{eq:bound-func-on-real}
and $|S(k,h)(x)|\leq 1$, we have
\begin{align*}
&\left|\sum_{k=-\infty}^{-M-1}F(kh)S(k,h)(x)
+\sum_{k=N+1}^{\infty}F(kh)S(k,h)(x)
\right|\\
&\leq
\sum_{k=-\infty}^{-M-1}|f(\phi(kh))|
+\sum_{k=N+1}^{\infty}|f(\phi(kh))|\\
&\leq K\sum_{k=-\infty}^{-M-1}
\left|\frac{\log(1+\rme^{kh})}{1+\log(1+\rme^{kh})}\right|^{\alpha}
\frac{1}{|1+\rme^{kh}|^{\beta}}
+K\sum_{k=N+1}^{\infty}
\left|\frac{\log(1+\rme^{kh})}{1+\log(1+\rme^{kh})}\right|^{\alpha}
\frac{1}{|1+\rme^{kh}|^{\beta}}\\
&\leq
K\sum_{k=-\infty}^{-M-1}
\left|\frac{\rme^{kh}}{1+\rme^{kh}}\right|^{\alpha}
\frac{1}{|1+\rme^{kh}|^{\beta}}
+K\sum_{k=N+1}^{\infty}
\left|\frac{\rme^{kh}}{1+\rme^{kh}}\right|^{\alpha}
\frac{1}{|1+\rme^{kh}|^{\beta}}.
\end{align*}
Furthermore, the second term is estimated as
\begin{align*}
K\sum_{k=N+1}^{\infty}
\left|\frac{\rme^{kh}}{1+\rme^{kh}}\right|^{\alpha}
\frac{1}{|1+\rme^{kh}|^{\beta}}
=\sum_{k=N+1}^{\infty}
\frac{K\rme^{-\beta kh}}{(1+\rme^{-kh})^{\alpha+\beta}}
\leq \sum_{k=N+1}^{\infty}
\frac{K\rme^{-\beta kh}}{(1 + 0)^{\alpha+\beta}}
\leq \frac{K}{h}\int_{Nh}^{\infty}\rme^{-\beta x}\diff x
=\frac{K}{\beta h}\rme^{-\beta N h}.
\end{align*}
In the same manner, the first term is estimated as
\[
K\sum_{k=-\infty}^{-M-1}
\left|\frac{\rme^{kh}}{1+\rme^{kh}}\right|^{\alpha}
\frac{1}{|1+\rme^{kh}|^{\beta}}
\leq \frac{K}{\alpha h}\rme^{-\alpha M h}.
\]
Using the relation~\eqref{eq:def-MN} and $\mu=\min\{\alpha,\beta\}$,
we have
\[
\frac{K}{\alpha h}\rme^{-\alpha M h}
+\frac{K}{\beta h}\rme^{-\beta N h}
\leq
\frac{K}{\alpha h}\rme^{-\mu n h}
+\frac{K}{\beta h}\rme^{-\mu n h}
\leq
\frac{K}{\mu h}\rme^{-\mu n h}
+\frac{K}{\mu h}\rme^{-\mu n h},
\]
which shows the desired inequality.
\end{proof}

\subsection{Proof of Lemma~\ref{lem:essential-bound}}
\label{sec:essential-ineq}

Our project is completed by showing Lemma~\ref{lem:essential-bound}.
For this purpose,
we prepare the following lemma.

\begin{lem}
\label{lem:final-prepare}
For all $t\leq 0$, it holds that
\[
 1 + \rme^{t} (\rme^{t+1} -1  + t + t^2 - \rme(1 + t + t^2))\geq 0.
\]
\end{lem}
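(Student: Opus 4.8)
The plan is to clear the exponential prefactor and reduce the claim to a more transparent auxiliary function. Writing $g(t)$ for the left-hand side and multiplying by $\rme^{-t}>0$ (which does not change the sign), I would instead show that
\[
h(t):=\rme^{-t}g(t)=\rme^{t+1}+\rme^{-t}-(\rme-1)(t^2+t)-(\rme+1)\geq 0
\qquad\text{for } t\leq 0 .
\]
A direct expansion confirms $g(t)=\rme^{t}h(t)$, so $g(t)\geq0\Leftrightarrow h(t)\geq0$. The gain is twofold: the quadratic $t^2+t=t(t+1)$ vanishes at $t=0$ and $t=-1$, and $h$ is invariant under the reflection $t\mapsto -1-t$ about $t=-1/2$ (one checks $h(-1-t)=h(t)$ at once). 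I therefore expect the two endpoints of the ``problem interval'' $[-1,0]$ to be \emph{double} zeros of $h$, and $t=-1/2$ to be a further critical point forced by the symmetry.

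Next I would record
\[
h'(t)=\rme^{t+1}-\rme^{-t}-(\rme-1)(2t+1),\qquad
h''(t)=\rme^{t+1}+\rme^{-t}-2(\rme-1),
\]
and verify the key equalities $h(0)=h(-1)=0$, $h'(0)=h'(-1)=0$, and (differentiating $h(-1-t)=h(t)$ at the fixed point) $h'(-1/2)=0$. Thus $t=0$ and $t=-1$ are genuine double zeros of $h$, and $h'$ has the three zeros $-1,-1/2,0$. The remaining task is to prove that $h$ does not dip below zero between its two double zeros, nor anywhere on $(-\infty,-1]$.

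To pin down the profile of $h$ I would analyze $h''$. Since $(h'')''(t)=\rme^{t+1}+\rme^{-t}>0$, the function $h''$ is strictly convex and hence has at most two real zeros. From $h''(-1)=h''(0)=3-\rme>0$ together with $h''(-1/2)=2\sqrt{\rme}-2(\rme-1)<0$ (the last inequality is $\sqrt{\rme}<\rme-1$, i.e.\ $\rme^2-3\rme+1>0$, which holds since $\rme>(3+\sqrt5)/2$), it follows that $h''$ has exactly two zeros $\tau_1\in(-1,-1/2)$ and $\tau_2\in(-1/2,0)$, being positive outside $[\tau_1,\tau_2]$ and negative inside. Reading off the sign of $h'$ through its three zeros—noting that each lies in a distinct monotonic piece of $h'$—then yields $h'\leq 0$ on $(-\infty,-1]$, $h'\geq 0$ on $[-1,-1/2]$, and $h'\leq 0$ on $[-1/2,0]$.

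Finally I would assemble these facts. On $(-\infty,-1]$ the function $h$ decreases to $h(-1)=0$ while $h(t)\to+\infty$ as $t\to-\infty$, so $h\geq0$ there; on $[-1,-1/2]$ it rises from $0$ to a positive local maximum $h(-1/2)$; and on $[-1/2,0]$ it falls back to $h(0)=0$. Hence $h\geq0$ on all of $(-\infty,0]$, the value $0$ being attained exactly at the double zeros $t=-1$ and $t=0$; multiplying by $\rme^{t}$ recovers $g(t)\geq0$, which is the claim. The main obstacle is precisely this middle step: ruling out that $h$ turns downward between its two double zeros. That is exactly what the strict convexity of $h''$ settles, since it forbids any fourth zero of $h'$ and thereby fixes the up-then-down shape of $h$ on $[-1,0]$.
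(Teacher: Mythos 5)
Your proof is correct, and it takes a genuinely different route from the paper's. The paper works with $p(t)=1+\rme^{t}(\rme^{t+1}-1+t+t^2-\rme(1+t+t^2))$ directly: it computes $p'(t)=\rme^{t}q(t)$ with $q(t)=2\rme^{t+1}-\rme(1+t)(2+t)+t(3+t)$, then climbs down through $q'$ and $q''$, locating implicitly defined zeros $a,b$ of $q'$ and a zero $c$ of $q$ between them, to conclude that $p$ attains its minimum value $0$ at $t=-1$ and $t=0$. You instead normalize first, setting $h(t)=\rme^{-t}p(t)=\rme^{t+1}+\rme^{-t}-(\rme-1)(t^2+t)-(\rme+1)$, which exposes the reflection symmetry $h(-1-t)=h(t)$; this makes the three critical points $-1,-1/2,0$ of $h'$ \emph{explicit} (rather than implicit like the paper's $a,b,c$), and the strict convexity of $h''$ — together with the sign checks $h''(-1)=h''(0)=3-\rme>0$ and $h''(-1/2)=2\sqrt{\rme}-2(\rme-1)<0$ — caps the number of zeros of $h'$ at three, forcing the decrease/increase/decrease profile and hence $h\geq 0$ with equality exactly at the double zeros $t=-1,0$. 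The two arguments identify the same minima and are both elementary calculus, but yours trades the paper's chain of implicitly located sign changes for a symmetry-plus-convexity argument in which every evaluation is explicit; the only implicit objects left are the zeros $\tau_1,\tau_2$ of $h''$, whose exact positions are irrelevant. I verified the algebra: $\rme^{-t}p(t)$ does equal your $h$, the symmetry holds, $h(0)=h(-1)=h'(0)=h'(-1)=h'(-1/2)=0$, and $\sqrt{\rme}<\rme-1$ (equivalently $\rme>(3+\sqrt{5})/2$), so all the claimed sign conditions are valid.
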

\begin{proof}
Put $p(t)$ and $q(t)$ as
\begin{align*}
 p(t)&=1 + \rme^{t} (\rme^{t+1} -1  + t + t^2 - \rme(1 + t + t^2)),\\
 q(t)&=2\rme^{t+1} - \rme(1+t)(2+t) + t(3+t).
\end{align*}
Then, the derivative of $p$ is expressed as
$p'(t) = \rme^{t} q(t)$.
Since the signs of $p'(t)$ and $q(t)$ are the same,
we investigate $q(t)$. Differentiating $q(t)$ as
\begin{align*}
 q'(t) &= 2 \rme^{t+1} + (1-\rme)(2t + 3),\\
 q''(t)&= 2(\rme^{t+1} + 1 - \rme),
\end{align*}
we have $q''(\log((\rme - 1)/\rme))=0$.
From $q'(-1)=q'(0)=3-\rme>0$ and $q'(\log((\rme - 1)/\rme))<0$,
there exist unique points $a$ and $b$
such that $q'(a)=q'(b)=0$
with $-1<a<\log((\rme - 1)/\rme)$
and $\log((\rme - 1)/\rme)<b<0$.
From the monotonicity property on the intervals $-1<t<a$
and $b<t<0$,
we have $q(a)>q(-1)=0$ and $q(b)<q(0)=0$,
which derives $q(a)>0>q(b)$.
Therefore,
there exists a unique point $c$
such that $q(c)=0$ with $a<c<b$.
Using this $c$, we have
$q(t)< 0$ for $t<-1$,
$q(-1)=0$,
$q(t)>0$ for $-1<t<c$,
$q(c)=0$,
$q(t)<0$ for $c<t<0$, and
$q(0)=0$.
By summing up the above arguments, we can determine
the sign of $p'(t)=\rme^t q(t)$, from which we can conclude that
$p(t)$ has its minimum at $t=-1$ and $t=0$, i.e.,
$p(t)\geq p(-1)=p(0)=0$.
\end{proof}

We are in a position to prove Lemma~\ref{lem:essential-bound}.

\begin{proof}
By the maximum modulus principle,
the left-hand side of~\eqref{eq:essential-ineq}
has its maximum on the boundary of $\overline{\domD_{\pi}}$,
i.e., $\zeta=x\pm\imnum \pi$.
From the symmetry with respect to the real axis,
it is sufficient to consider $\zeta=x+\imnum \pi$.
Therefore, we show
\begin{equation}
 f(x):= \left|
\frac{\log(1+\rme^{x+\imnum \pi})}{1+\log(1+\rme^{x+\imnum \pi})}
\cdot
\frac{\rme^{-l}+\rme^{x+\imnum \pi}}{\rme^{x + \imnum \pi}}
\right|\leq 1.
\label{eq:goal-ineq}
\end{equation}
In the case where $x=0$, we have
\[
 f(0) = \lim_{x\to 0} f(x) = \left|\frac{1}{\rme}\right| < 1.
\]
The remaining cases are (i) $x>0$ and (ii) $x<0$, which we consider independently.
Note that
\[
 \log(1+\rme^{x+\imnum \pi}) = \log(1-\rme^x)
= \log|1 - \rme^x| + \imnum \arg(1 - \rme^x)
=
\begin{cases}
\log(\rme^x - 1) + \imnum \pi & (x > 0),\\
\log(1 - \rme^x) & (x < 0).
\end{cases}
\]
(i) $x>0$. In this case, $f(x)$ is expressed as
\[
 f(x) =
\left|
 \frac{\log(\rme^x - 1) + \imnum \pi}{1+\log(\rme^x - 1) + \imnum \pi}
\right|
\left|
 \frac{\rme^{-l}-\rme^x}{-\rme^x}
\right|.
\]
By putting $t=\log(\rme^x - 1)$,
$f(x)\leq 1$ for $x>0$ is equivalent to
\[
 g(t):=
\left|\frac{t+\imnum \pi}{1 + t + \imnum \pi}\right|
\left|\frac{\rme^{-l}-\rme^{t}-1}{-\rme^{t}-1}\right|
\leq 1
\]
for $t\in\mathbb{R}$. This is proved by showing
\begin{equation}
 \{g(t)\}^2 = \frac{t^2 + \pi^2}{(1+t)^2 + \pi^2}
\left(1 - \frac{\rme - 1}{\rme (\rme^t + 1)}\right)^2
\leq 1
\label{eq:g-square-ineq}
\end{equation}
for $t\in\mathbb{R}$.
In the following, we show~\eqref{eq:g-square-ineq}
for two cases: (i-a) $t>-1/2$ and (i-b) $t\leq -1/2$.

\begin{enumerate}
 \item[(i-a)] $t>-1/2$. We begin with an obvious inequality:
\[
 0\leq (\rme^{1/2} + 1)^2 = \rme + 2 \rme^{1/2} + 1,
\]
which is equivalent to
$\rme - 1 \leq 2\rme + 2\rme^{1/2}$, and further equivalent to
\[
 \frac{1}{1 + \rme^{-1/2}}\leq \frac{2\rme}{\rme - 1}.
\]
From this and $t > -1/2$, we have
\[
 0\leq \frac{1}{\rme^t + 1}\leq \frac{1}{1+\rme^{-1/2}}
\leq \frac{2\rme}{\rme - 1},
\]
from which we have
\[
 -2\leq -\frac{\rme - 1}{\rme(\rme^t + 1)}\leq 0
\quad \Leftrightarrow \quad
\left(1 - \frac{\rme - 1}{\rme(\rme^t + 1)}\right)^2\leq 1.
\]
Therefore, it holds for $t>-1/2$ that
\[
 \{g(t)\}^2\leq \frac{t^2 + \pi^2}{(0 + t)^2 + \pi^2}
\left(1 - \frac{\rme - 1}{\rme(\rme^t + 1)}\right)^2
=\left(1 - \frac{\rme - 1}{\rme(\rme^t + 1)}\right)^2
\leq 1.
\]
 \item[(i-b)] $t\leq -1/2$. First, for all $t\in\mathbb{R}$,
it holds that
\[
 (\rme - 1)\left(t + \frac{\rme}{\rme - 1}\right)^2
+\frac{\pi^2(\rme - 1)^2 - \rme}{\rme-1} \geq 0,
\]
which is equivalent to
\[
 t^2 + \pi^2 \leq \rme((1+t)^2 + \pi^2)
\quad \Leftrightarrow \quad
\frac{t^2+\pi^2}{\rme((1+t)^2 + \pi^2)} \leq 1.
\]
Therefore, it holds for $t\leq -1/2$ that
\[
\{g(t)\}^2
\leq \frac{t^2 + \pi^2}{(1+t)^2 + \pi^2}
\left(1 - \frac{\rme - 1}{\rme(\rme^{-1/2}+ 1)}\right)^2
=\frac{t^2 + \pi^2}{\rme((1+t)^2 + \pi^2)} \leq 1.
\]
\end{enumerate}
(ii) $x<0$. In this case, $f(x)$ is expressed as
\[
 f(x) = \left|
 \frac{\log(1-\rme^x)}{1+\log(1-\rme^x)}
 \left(1 - \frac{\rme-1}{\rme}\rme^{-x}\right)
\right|.
\]
By putting $t=\log(1 - \rme^x)$,
$f(x)\leq 1$ for $x < 0$ is equivalent to
\[
 h(t):=\left|
\frac{t}{1+t}\left(1 - \frac{\rme - 1}{\rme(1-\rme^t)}\right)
\right|\leq 1
\]
for $t\leq 0$, which is shown as follows. In the case where $t=0$ and $t=-1$,
by L'H\^{o}pital's rule, we have
\begin{align*}
h(0)&=\lim_{t\to 0}\left|
\frac{t\rme^{t+1} - t}{\rme^{t+1}-\rme + t\rme^{t+1} - t\rme}
\right|
=\lim_{t\to 0}\left|
\frac{\rme^{t+1} + t\rme^{t+1} - t}{2\rme^{t+1} + t \rme^{t+1} - \rme}
\right|
=\frac{\rme - 1}{\rme} < 1,\\
h(-1)&=\lim_{t\to -1}\left|
\frac{t\rme^{t+1} - t}{\rme^{t+1}-\rme + t\rme^{t+1} - t\rme}
\right|
=\lim_{t\to -1}\left|
\frac{\rme^{t+1} + t\rme^{t+1} - t}{2\rme^{t+1} + t \rme^{t+1} - \rme}
\right|
=\frac{1}{\rme - 1} < 1.
\end{align*}
The remaining cases are $t<-1$ and $-1<t<0$.
In consideration of the signs of $t/(1+t)$ and
$1 - (\rme - 1)/(\rme(1 - \rme^t))$,
we can remove the absolute value sign from $h(t)$ as
\[
 h(t) = \frac{t}{1+t}\left(1 - \frac{\rme - 1}{\rme(1-\rme^t)}\right).
\]
According to Lemma~\ref{lem:final-prepare}, it holds for all $t\leq 0$ that
\[
 h'(t) =
\frac{1 + \rme^t(\rme^{t+1} - 1 + t + t^2 - \rme(1+t+t^2))}
     {\rme(\rme^t -1)^2(1+t)^2}
\geq 0.
\]
In other words, $h(t)$ increases monotonically for $t\leq 0$.
Thus, we obtain $h(t)\leq h(-1) < 1$ for $t<-1$ and
$h(t)\leq h(0) < 1$ for $-1 <t< 0$.
This completes the proof.
\end{proof}

\section{Concluding remarks}
\label{sec:conclusion}

The Sinc approximation is an approximation formula
on the infinite interval $(-\infty,\infty)$; thus,
an appropriate conformal map is required to apply the Sinc approximation
on other intervals.
In the case of exponentially decaying functions
that are defined on the semi-infinite interval $(0,\infty)$,
Stenger~\cite{stenger93:_numer_method} proposed the use of $t=\psi(x)$
to map $(-\infty,\infty)$ onto $(0,\infty)$ and derived the
approximation formula~\eqref{eq:Stenger-approx}.
This paper has proposed the use of $t=\phi(x)$ rather than $t=\psi(x)$
and derived a new approximation formula~\eqref{eq:Shintaku-approx}.
Through a theoretical analysis, we have given
a computable error bound for the proposed approximation formula,
which is quite useful for construction
of algorithms with automatic result verification
in arbitrary-precision arithmetic.
By comparing these two approximation formulas
theoretically and numerically,
we have demonstrated the superiority of the proposed formula.

This improvement can be extended to many other numerical methods based on
the Sinc approximation combined with the conformal map $t=\psi(x)$.
For example, numerical methods for
the Laplace transform~\cite{stenger93:_numer_method},
the Laplace transform inversion~\cite{stenger95:_colloc},
initial value problems~\cite{stenger99:_ode_ivp},
second-order differential equations~\cite{stenger11:_handb}, and
Wiener--Hopf equations~\cite{stenger00:_summar}.
Replacing the conformal map $t=\psi(x)$ with $t=\phi(x)$
in such methods may achieve faster convergence.
In future, we plan to conduct theoretical analyses of such cases.


\bibliography{SemiInfSE-Sinc}

\end{document}